\setlist[enumerate,1]{label=\alph*)}
\setlist[enumerate,2]{label=(\roman*),ref=\theenumi(\roman*)}
\setlist[enumerate,3]{label=\Roman*.}
\let\oldState\State
\newcommand*{\stopnumbering}{%
  \let\olditem\item
  \renewcommand{\item}[1][]{\olditem[]}%
  \let\State\Statex}
\newcommand*{\resumenumbering}{%
  \let\item\olditem
  \let\State\oldState}
\algrenewcommand\algorithmicrequire{\textbf{Input:}}
\algrenewcommand\algorithmicensure{\textbf{Output:}}
\newcommand{\posreals}{\ensuremath{\reals_{\geq 0}}}
\newcommand{\reals}{\ensuremath{\mathds{R}}}
\newcommand{\naturals}{\ensuremath{\mathbb{N}}}
\newcommand{\ints}[1]{\ensuremath{[#1]}}
\newcommand{\inn}{\text{in}}
\newcommand{\outn}{\text{out}}
\let \oldforall\forall
\renewcommand{\forall}{\oldforall\ }		
\newcommand{\cupdot}{\mathbin{\dot{\cup}}}
\newcommand*\colvec[1]{\begin{pmatrix}#1\end{pmatrix}}
\newcommand*\scolvec[1]{\begin{psmallmatrix}#1\end{psmallmatrix}}
\renewcommand{\phi}{\varphi}
\renewcommand{\epsilon}{\varepsilon}
   \providecommand\given{}
   \newcommand\SetSymbol[1][]{%
      \nonscript\:#1\vert
      \allowbreak
      \nonscript\:
      \mathopen{}}
   \DeclarePairedDelimiterX\Set[1]\{\}{%
      \renewcommand\given{\SetSymbol[\delimsize]}
      #1
   }
\def\myspecialnode#1{
	\tikz@scan@one@point\pgfutil@firstofone($#1$)
	\pgf@xa=\pgf@x%
	\pgf@ya=\pgf@y%
	\pgfmathparse{1/(\pgf@xa*\pgf@xa/(\pgf@xx*\pgf@xx)+\pgf@ya*\pgf@ya/(\pgf@yy*\pgf@yy))}
	\let\myval=\pgfmathresult
}
\newcommand{\DC}{DC model\xspace}
\newcommand{\TR}{Transport model\xspace}
\newcommand{\tension}{elasticity\xspace}
\newcommand{\fmax}{{\bar{f}}}
\newcommand{\fu}{f^+}
\newcommand{\fl}{f^-}
\newcommand{\pmax}{{\bar{p}}}
\newcommand{\pu}{p^+}
\newcommand{\pl}{p^-}
\newcommand{\QDCf}{Q_f}
	\tikzset{
		highlight/.style={
			fill=accentuating green,
			fill opacity=.2},
		gridnode/.style={
			circle,
			minimum size=18pt,
			inner sep=0pt,
			text=black,
			draw=black},
		plainplot/.style={
			no markers,
			thick},
		comptime/.style={
			const plot,
			plainplot,
			solid},
		extpoint/.style={
			fill,
			circle,
			minimum width=4pt,
			inner sep=0},
		transparent label/.style={
			fill=white,
			fill opacity=0.6,
			text opacity=1}
	}
	\pgfplotsset{
		compat=1.14,
		cleanplot/.style={
			width=.7\textwidth,
			axis lines=middle,
			xtick=\empty,
			ytick=\empty,
			every axis x label/.style={at={(current axis.right of origin)},anchor=north east},
			every axis y label/.style={at={(current axis.above origin)},anchor=north east}},
	}
\crefname{enumi}{}{}
\crefname{equation}{}{}
\newtheorem{theorem}{Theorem}%
\crefname{theorem}{Theorem}{Theorems}
\newtheorem{proposition}[theorem]{Proposition}
\crefname{proposition}{Proposition}{Propositions}
\newtheorem{lemma}[theorem]{Lemma}%
\crefname{lemma}{Lemma}{Lemmas}
\newtheorem{corollary}[theorem]{Corollary}
\crefname{corollary}{Corollary}{Corollaries}
\newtheorem{remark}[theorem]{Remark}%
\crefname{remark}{Remark}{Remarks}
\newtheorem{definition}[theorem]{Definition}%
\crefname{definition}{Definition}{Definitions}
\theoremstyle{break}
\newtheorem{example}[theorem]{Example}%
\crefname{example}{Example}{Examples}
\theoremstyle{nonumberplain}%
\newtheorem{proof}{Proof}%
\crefname{proof}{Proof}{Proofs}
\theoremstyle{break}
\declaretheoremstyle[
style=break,
qed={},
headpunct={},
]{examplec}
\crefname{examplec}{Example}{Example}
\renewcommand\thmcontinues[1]{%
	\ifcsname hyperref\endcsname
		\hyperref[#1]{continued}%
	\else
		continued%
	\fi}
\author{René Brandenberg \and Paul Stursberg\thanks{supported by the German Federal Ministry for Economic Affairs and Energy (FKZ 03ET4029) on the basis of a decision by the German Bundestag.}\\Technische Universität München, Germany\\
\{rene.brandenberg, paul.stursberg\}@tum.de}
\title{Extremal solutions for Network Flow with Differential Constraints -- A Generalization of Spanning Trees}
\def\blfootnote{\gdef\@thefnmark{}\@footnotetext}
\begin{document}

\maketitle
\blfootnote{\textup{2020} \textit{Mathematics Subject Classification}.
Primary 05C21; Secondary 90C35, 90B10, 05C83, 05C05.}
\blfootnote{\textit{Key words and phrases.} Network Flow, differential constraints, extremal solutions, cactus graph,  energy grids,  Linearized Load Flow Model, Wheatstone bridge.}

\begin{abstract}

  In network flow problems, there is a well-known one-to-one relationship between extreme points of the feasibility region and trees in the associated undirected graph. The same is true for the dual differential problem. In this paper, we study problems where the constraints of both problems appear simultaneously, a variant which is motivated by an application in the expansion planning of energy networks.
We show that all extreme points still directly correspond to graph-theoretical structures in the underlying network. The reverse is generally also true in all but certain exceptional cases.
We furthermore characterize graphs in which these exceptional cases never occur and present additional criteria for when those cases do not occur due to parameter values.

\end{abstract}


\section{Introduction}\label{sec:intro}


The \emph{minimum cost flow} problem is a classical optimization problem on a graph: In a network with cost and capacity bound assigned to its edges, what is the lowest-cost configuration to transfer a given amount of \enquote{flow} from a dedicated \emph{source} to a \emph{sink}?
This problem has been studied extensively and is one of the cornerstones of \emph{network flow theory} (see, \eg \cite{Rockafellar:1984}). It is well-known in particular for a very elegant combinatorial characterization of extremal solutions in terms of \emph{spanning trees} in the underlying graph.

One area of application where network flows commonly play a role is \emph{Energy System Optimization}, the study of optimal dispatch and expansion in electrical power systems. Especially in modern systems with a high share of renewables, transmission networks may play an important role since supply from renewable energy sources (\eg offshore or coastal wind turbines
) may be located far from demand centers.

These transmission systems typically operate under \emph{alternating current} and power flows across the system are governed by physical laws, whose non-convex equations do not lend themselves very well to mathematical optimization methods. As a consequence, different approximations have been developed to represent the most important aspects of an alternating current transmission grid. The simplest such approximation is the so-called \emph{Transport Model}. Here, the flow on each individual transmission line is restricted only by a capacity constraint, like in the classical network flow problem mentioned above (see, e.g., \cite{Tuohy:2009,Schaber:2012}).

While flows on different lines are independent under the Transport Model, the Linearized Load Flow Model (or \DC) approximates the interaction of flows on different transmission lines in the following way (see \cite{Stott:2009}): A \emph{potential value} is assigned to each vertex in the network and the power flow on each line is required to be proportional to the potential difference between its two endpoints. The constant of variation may be different for each individual line and depends on its technical parameters (the line's \emph{susceptance}), influenced by the length of the line and the material used (see, e.g., \cite{Hewes:2016}).

To enable a more targeted analysis, one needs a better understanding of the solution structures. These structures have long been known for the classical network flow theory which, as mentioned above, corresponds to the \TR. The \DC, in a similar way, can be captured mathematically by \emph{differential flows} in a network. In this paper, we generalize the theory of classical network flows to the setting of differential flows, providing a useful characterization of its extremal solutions. This work is based on results from \cite{Stursberg:2019}.

\subsection{Network graph and differential flow}

Starting from an undirected graph, we assign arbitrary directions to each edge in order to distinguish the direction of flow on that edge (such that a negative flow represents a flow in the reverse direction). The result is an \emph{anti-symmetric} directed graph (\ie between every pair of vertices there exists at most one directed edge). Adding upper and lower bounds for flows on each edge and for flow conservation deficit/surplus in each vertex, as well as an elasticity value for each edge, we obtain the following definition of a \emph{network} in the context of this article:

\begin{definition}[Network]\label{def:network_graph}
	A \emph{network} $(V,E,b,\pmax,\fmax)$ consists of a weakly connected, anti-symmetric directed graph $(V,E)$ together with an \emph{\tension vector} $b \in \reals^E$ with $b > 0$ and pairs of upper and lower bounds $\pmax = (\pl,\pu) \in (\reals \cup \Set{-\infty})^V \times (\reals \cup \Set{\infty})^V$ and $\fmax = (\fl,\fu) \in (\reals \cup \Set{-\infty})^E \times (\reals \cup \Set{\infty})^E$ for vertices and edges, respectively, such that $\pl \leq \pu$ and $\fl \leq \fu$.
\end{definition}

We write, \eg, $\pu=\infty$ to denote that $\pu_e = \infty$ for all $e \in E$.

Based on our motivation that positive and negative flows on an edge merely represent the same flow rate in different directions, it seems most natural to have $\fl_e = - \fu_e$ for all $e \in E$. However, all results in this paper hold for the more general case without this requirement, as well. The elasticity vector $b$ corresponds to the \emph{line susceptance} in an alternating current power transmission network.

\begin{definition}[Differential Flow]
	\label{def:feas_diff_flow}
	Let $G=(V,E,b,\pmax,\fmax)$ be a network. We call $f \in \reals^E$ a \emph{differential flow} in $G$ if there exists a potential $\phi \in \reals^V$ such that for $(v,w) \in E: f_{vw} = b_{vw} \cdot (\phi_w - \phi_v)$. In this case, we say that $\phi$ \emph{induces} $f$.
	Furthermore, we say that $f$ is \emph{feasible} (for $G$) if
	\begin{enumerate}
		\item $\fl_e \leq f_e \leq \fu_e$ for all $e \in E$, and
		\item $\pl_v \leq \sum_{e \in \delta^\outn(v)} f_e - \sum_{e \in \delta^\inn(v)} f_e \leq \pu_v$ for all $v \in V$,
	\end{enumerate}
	where $\delta^\inn(v)$ denotes the set of all edges ending in $v$, while $\delta^\outn(v)$ denotes the set of all edges starting in $v$.
\end{definition}

A differential flow is thus an ordinary flow on the edges of the graph $(V,E)$, which satisfies capacity
constraints on the edges and relaxed flow conservation constraints on the vertices, as well as the additional \emph{differential constraints} $f_{vw} = b_{vw} \cdot (\phi_w - \phi_v)$ for a suitable choice of the potential $\phi$. Note that the vertex constraints represent bounds on the flow that \emph{is created} in the respective vertex, the upper bound $\pu_v$ thus represents a \emph{lower} bound on the \emph{excess} in the vertex $v$ as defined, \eg, in \cite[Ch. 7.6]{Ahuja:1993}.

In the case of $\pl=\pu=0$ the conditions in b) would be ordinary flow conservation constraints (which are well-known from the literature on network flows, see, \eg, \cite[Chs. 3.4 and 7.3]{Papadimitriou:1998} or \cite[Ch. 13.2]{Schrijver:2003}) for all vertices. In that case all flows would be sums of flows along cycles. However, the differential constraints require that no cycle can carry a nonzero flow (since otherwise $\phi$ would strictly increase along the cycle). Consequently, $f \equiv 0$ would be the only feasible differential flow in $G$.

\subsection{The Differential Flow Polytope}
Given a \emph{network} $G=(V,E,b,\pmax,\fmax)$,
we define the \emph{\tension matrix} $B$ of $G$ by
\begin{equation}
  B_{ve} := \begin{cases}
    b_e & e\in \delta^\inn(v)\\
    -b_e & e\in \delta^\outn(v)\\
    0 & \text{else.}
  \end{cases}\label{eq:susceptance_matrix}
\end{equation}
Thus we have $B = A \cdot \diag(b)$ with $A$ being the incidence matrix of the graph $(V, E)$.

We can now write the set of feasible differential flows as
\begin{equation}
  \QDCf(G) := \Set*{f \in \reals^E \given \exists \phi \in \reals^V: \begin{gathered}B^{\top}\phi=f \\ \pl \leq -Af \leq \pu \\ \fl \leq f \leq \fu \end{gathered}}.\label{eq:q_f}
\end{equation}

As mentioned above, we refer to the first set of constraints $B^{\top}\phi=f$ as \emph{differential constraints} and to the other two sets of constraints $\pl \leq -Af \leq \pu$ and $\fl \leq f \leq \fu$ as \emph{relaxed flow conservation constraints} (or \emph{vertex constraints}) and \emph{capacity constraints} (or \emph{edge constraints}), respectively. We also see, as mentioned above, that inverting the sign of the vector $b$ does not change the resulting set $\QDCf(G)$, it merely inverts the sign of the corresponding $\phi$, as well.

Note that $\dim(\QDCf(G)) \leq |V|$, since the polyhedron is contained in the $|V|$-dimensional linear subspace defined by $B^{\top}\phi=f$. If we identify this linear subspace with $\reals^V$, we obtain the following alternative representation of $\QDCf(G)$:
\begin{equation}
  Q_\phi(G) := \Set*{\phi \in \reals^V \given \begin{gathered} \fl \leq B^{\top}\phi \leq \fu \\ \pl \leq -AB^{\top}\phi \leq \pu \end{gathered}}\label{eq:q_phi}
\end{equation}

The matrix $AB^\top$ is known in electrical engineering as the \emph{nodal admittance matrix} (see e.g., \cite{Stott:2009}) and if $b_e=1$ for all $e \in E$ then $AB^{\top} = AA^\top$ is known as the \emph{Laplacian matrix} of $(V,E)$, which captures many interesting properties of the graph (see, \eg, \cite{Chung:1997} about its eigenvalues).

Before we proceed, we observe that all three matrices $A$, $B$, and $AB^\top$ have rank $|V|-1$.

\begin{proposition}
  \label{prop:ABprop}
  Let $(V,E)$ be a weakly connected directed graph (\ie~one where the underlying undirected graph is connected) with incidence matrix $A$. Let $b \in \reals^E$, $b > 0$ and let $B$ be the \tension matrix as defined in \cref{eq:susceptance_matrix}.
  Then,
  \begin{equation} \label{eq:all-ones}
    A^\top v = 0 \Longleftrightarrow B^\top  v = 0 \Longleftrightarrow AB^\top  v = 0 \Longleftrightarrow v = \lambda \cdot \mathbbm{1}
    \text{ for } \lambda \in \reals,
  \end{equation}
  where $\mathbbm{1} : = \colvec{1\\\vdots\\1}$.
\end{proposition}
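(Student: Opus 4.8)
The plan is to establish the chain of biconditionals by reducing everything to the single combinatorial statement $A^\top v = 0 \iff v = \lambda\mathbbm{1}$ and linking the three matrix conditions to it one at a time. First I would record that, since $B = A\cdot\diag(b)$, we have $B^\top = \diag(b)\,A^\top$. As $b > 0$, the matrix $\diag(b)$ is invertible, and left multiplication by an invertible matrix preserves kernels; hence $B^\top v = 0 \iff \diag(b)(A^\top v) = 0 \iff A^\top v = 0$, which disposes of the equivalence between the first two conditions immediately.

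Next I would treat the genuinely combinatorial equivalence $A^\top v = 0 \iff v = \lambda\mathbbm{1}$. Reading off the sign convention of the incidence matrix implicit in \cref{eq:susceptance_matrix} (so that $A_{ve}=-1$ when $e$ starts in $v$ and $A_{ve}=+1$ when $e$ ends in $v$), the entry of $A^\top v$ belonging to an edge $e=(v,w)$ equals $v_w - v_v$. Thus $A^\top v = 0$ says precisely that $v$ takes the same value on the two endpoints of every edge. The direction $v = \lambda\mathbbm{1} \Rightarrow A^\top v = 0$ is then trivial, and for the converse I would invoke weak connectedness: any two vertices are joined by an undirected path, along whose edges $v$ cannot change, so $v$ is globally constant, i.e. $v = \lambda\mathbbm{1}$.

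The remaining and slightly more delicate link is $AB^\top v = 0 \iff A^\top v = 0$. The implication $\Leftarrow$ is immediate, as $A^\top v = 0 \Rightarrow B^\top v = 0 \Rightarrow AB^\top v = 0$. For $\Rightarrow$ I would pass to the quadratic form: writing $AB^\top = A\,\diag(b)\,A^\top$, one computes $v^\top AB^\top v = (A^\top v)^\top \diag(b)(A^\top v) = \sum_{e\in E} b_e\,(A^\top v)_e^2$. Since every $b_e > 0$, this is a nonnegatively weighted sum of squares, so $AB^\top v = 0$ forces $v^\top AB^\top v = 0$ and hence $(A^\top v)_e = 0$ for all $e$, i.e. $A^\top v = 0$. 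Combining the three links closes the full four-way chain.

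The only real obstacle is this last implication: the temptation is to argue it combinatorially, but the clean route is to recognize $AB^\top$ as the weighted graph Laplacian and exploit its positive semidefiniteness through the factorization $A\,\diag(b)\,A^\top$; positivity of $b$ is exactly what makes the weighted sum of squares vanish only when $A^\top v = 0$. Everything else is bookkeeping. The rank assertion stated after the proposition then follows by rank--nullity, each of the three kernels now being one-dimensional and spanned by $\mathbbm{1}$.
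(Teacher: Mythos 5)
Your proof is correct. Note that the paper itself states \cref{prop:ABprop} without proof (it is treated as a standard fact), so there is no in-paper argument to compare against; your write-up supplies exactly the standard reasoning one would expect. All three links are sound: $\diag(b)$ being invertible identifies $\ker(B^\top)$ with $\ker(A^\top)$; weak connectedness forces any vector constant across every edge to be a multiple of $\mathbbm{1}$; and the factorization $AB^\top = A\,\diag(b)\,A^\top$ together with the quadratic form $v^\top AB^\top v = \sum_{e\in E} b_e\,(A^\top v)_e^2$ is precisely the right tool for the only non-trivial implication, since positivity of $b$ makes the sum of squares vanish only when $A^\top v = 0$. The closing remark that the rank statement $\rank(A)=\rank(B)=\rank(AB^\top)=|V|-1$ follows by rank--nullity from the one-dimensional common kernel is also correct and matches how the paper uses the proposition.
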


\subsection{Optimal Flows and Differentials}
\label{sec:flows_and_differentials}

The polyhedron $\QDCf(G)$ bears a close resemblance to the \emph{feasible flow} and \emph{feasible differential} polyhedra, which are both well-known from the theory of network flows. \Textcite{Rockafellar:1984} gives an extensive account of the related theory, based on which we present the short overview given in this section.

Given a directed graph $G=(V,E)$ with incidence matrix $A$, the (uncapacitated) \emph{feasible flow} polyhedron (for some right-hand-side vector $d$ satisfying $\mathbbm{1}^\top d = 0$) can be written as
\begin{equation*}
	Q_F(G):= \Set*{x \in \reals^E  \given  Ax = d }.
\end{equation*}

Note that, while $Q_F(G)$ does not restrict the flow by any edge capacities nor sign constraints, we could add capacity constraints without changing the properties that we will discuss below, albeit at the cost of a somewhat more cumbersome notation.

Now, suppose that we want to treat positive and negative flows on an edge separately, say by assigning a (possibly different) positive cost $\fu_e\geq 0$ and $-\fl_e \geq 0$ to positive and negative flows, respectively.
To achieve this, we can split up the vector $x$ into a positive part $x^+$ and a negative part $x^-$ to obtain
\begin{equation}
	\min \Set*{{\fu}^\top x^+  - {\fl}^\top x^-  \given  x^+, x^- \geq 0, A(x^+-x^-)=d}.\label{eq:flows_differentials_poly}
\end{equation}
Observe that since $\fu_e, -\fl_e \geq 0$, we can assume \Wlog~that for each edge $e$, at most one of $x^+_e$ and $x^-_e$ is strictly positive.

The feasible flow polyhedron (written in the form of \cref{eq:flows_differentials_poly}) is famous for a very neat characterization of its combinatorial structure:
Any of its extremal solutions can be characterized by the set of components of $x^+$ and $x^-$ for which the sign constraints are binding. It is convenient now to differentiate between the set $\mathcal{N}$ of edges $e$ for which both $x^+_e$ and $x^-_e$ are fixed to $0$ and its complement $\mathcal{B}:=E\setminus \mathcal{N}$,
the set of edges $e$ for which one of $x^+_e$ and $x^-_e$ may assume a value strictly greater than $0$.

Assuming that $G$ is weakly connected, we have $\rank(A)=|V|-1$ by \cref{prop:ABprop}.
For the pair $(\mathcal{B},\mathcal{N})$ to uniquely describe an extremal solution, it is therefore necessary that $\mathcal{N}$ contains $|E|-(|V|-1)$ edges corresponding to linearly independent columns of $A$.
We call $\mathcal{B}$ a \emph{basis} if $\mathcal{B}$ consists of exactly $|V|-1$ such edges.

The following characterization now holds with respect to the bases of the above polyhedron: $\mathcal{B}$ is a basis if and only if the edges of $G$ which correspond to those columns of $A$ indexed by $\mathcal{B}$ form an undirected spanning tree in $G$. This can be seen as follows: The columns for any cycle in $G$ are always linearly dependent (as the column of any edge can be obtained as the sum of positive/negative columns of the other edges), hence the sets of $|V|-1$ linearly independent columns are exactly those corresponding to spanning trees.

A similar characterization holds for the optimization problem dual to \cref{eq:flows_differentials_poly}, the optimal differential problem. It asks for a potential $\phi$ maximizing the linear function $d^\top \phi$ over the polyhedron $Q_D(G)$ of \emph{feasible differentials} \cite{Rockafellar:1984} defined as follows:
\begin{equation*}
	Q_D(G):= \Set*{\phi \in \reals^V  \given  \fl \leq A^\top \phi \leq \fu }
\end{equation*}
The variables in this new polyhedron do not have to satisfy any (individual) bounds or sign restrictions.
An extreme point of this polyhedron is hence characterized by the set of components of the vector $A^\top \phi$ that are fixed to one of their respective capacity bounds. Dual to the notation above, we can call the set of edges that index these rows $\mathcal B$ and analogously $\mathcal N := E \setminus \mathcal B$. Again, by \cref{prop:ABprop}, $\rank(A)=|V|-1$ and hence in order to uniquely identify (up to translation along $\mathbbm{1}$) the vector $\phi$ conforming the specifications of $(\mathcal{B},\mathcal{N})$, we need that $\mathcal{B}$ indexes $|V|-1$ linearly independent rows of $A^\top$ (or equivalently columns of $A$). As argued above, a set $\mathcal{B}$ of exactly $|V|-1$ such rows satisfies this requirement if and only if the corresponding edges of $G$ form a spanning tree. Furthermore, the potential $\phi$ uniquely determined by $(\mathcal{B},\mathcal{N})$ (up to translation along $\mathbbm{1}$) is feasible if and only if $\fl_{\mathcal N} \leq A_{\mathcal N}^\top \phi \leq \fu_{\mathcal N}$.

Note that the roles of $\mathcal B$ and $\mathcal N$ in both cases are reversed: While for the feasible flow polyhedron, the entries of $x$ indexed by $\mathcal B$ are the ones that are free to take values between the respective bounds, the inequalities indexed by $\mathcal B$ in the feasible differential polyhedron are the ones that are forced to be binding. A summary of the characterizations outlined above is provided in \cref{tab:flows_and_differentials}.

\begin{table}
\scriptsize\centering
\begin{tabular}{l|c|c}
					&	feasible flows	&feasible differentials \\ \hline
	feasible set	&	$Q_F(G)$		&	$Q_D(G)$			\\
	solution conforms to $(\mathcal{B},\mathcal{N})$ if\dots & $x_{\mathcal N}:=x^+_{\mathcal N}-x^-_{\mathcal N}=0$ & all entries of $A_{\mathcal B}^\top\phi$ at upper/lower bound \\
	conforming solution is extremal if\dots & \multicolumn{2}{c}{$\mathcal{B}$ indexes a spanning tree}\\
		or (equivalently) if\dots& \multicolumn{2}{c}{$\mathcal{N}$ is a maximal set of edges not intersecting every spanning tree}\\
\end{tabular}
\caption{Characterizing properties of (extremal) solutions for the polyhedra of feasible flows and feasible differentials.
}
\label{tab:flows_and_differentials}
\end{table}

The connections of the above theory of basic flows and differentials to the polyhedron $\QDCf(G)$ defined in \cref{eq:q_f} above are easy to make:
The feasible flow polyhedron $Q_F(G)$ (possibly with added edge capacities) can be seen as a variant of the polyhedron $\QDCf(G)$ with $\pl = \pu = -d$ and without the differential constraint $B^{\top}\phi=f$.
On the other hand, $Q_D(G)$ is (up to a full-rank linear transformation) a special case of the polyhedron $\QDCf(G)$ with $\pmax = (-\infty,\infty)$.

In this sense, \emph{differential flows} jointly generalize the notions of flows and differentials. This also explains our choice of terminology (see \cref{def:feas_diff_flow}): A differential flow combines the defining properties of both flows and differentials, placing it somehow \emph{in between} the two concepts.
This raises the following question, which we answer in the following: Can a combinatorial characterization of extremal points along the lines of \cref{tab:flows_and_differentials} be recovered for $\QDCf(G)$, given that $\QDCf(G)$ imposes constraints of both types, $B^{\top}\phi=f$ and $\pl \leq -Af \leq \pu$?

\section{$\alpha$-Forests and $\alpha$-Trees}

In the following we prove that, in almost all cases, a combinatorial characterization of the extremal points of $\QDCf(G)$ is indeed possible: Using a suitable notion of acyclicity, extremal points of $\QDCf(G)$ can be associated
with maximal acyclic collections of edges \emph{and vertices} which, again, correspond to the set of active inequalities. Furthermore, every such collection also corresponds to an extremal point of $\QDCf(G)$ in all but some exceptional cases.

To motivate the following definition, observe the following (non-exhaustive) list of sufficient criteria:

The point $f$ is extremal in $\QDCf(G)$ if
\begin{itemize}
	\item the edge capacity constraints are active for every edge in an undirected spanning tree (this is the case of the original feasible differential problem (see \cref{tab:flows_and_differentials}),
	\item the edge capacity constraints are active in all edges from a spanning tree \emph{except one}, and additionally the relaxed flow conservation constraint is binding in one of the endpoints of that missing edge,
	\item the relaxed flow conservation constraints are active in exactly $|V|-1$ vertices (this follows from \cref{prop:ABprop}),
	\item an edge capacity constraint is active on an edge $(v,w)$ and any $|V|-2$ relaxed flow conservation constraints are active that do not contain both $v$ and $w$.
\end{itemize}

These observations (which will later be proved formally in a more general context) motivate the following definition of a structure that can be used to characterize extremal solutions (see~\cref{fig:alpha-tree}).

\begin{definition}[$\alpha$-Forest]\label{def:dc_ext_forest}
	Let $G=(V,E,b,\pmax,\fmax)$ be a network. A pair $F=(E_F,V_F)$ consisting of a set of edges $E_F \subseteq E$ and a set of vertices $V_F \subseteq V$ in $G$ is called an \emph{$\alpha$-forest} in $G$ if there exists an injective function $\alpha_F: V_F \rightarrow  E\setminus E_F$, mapping each vertex from $V_F$ to a neighboring edge, which is not already in $E_F$, such that the set $E_F \cup \alpha_F(V_F)$ does not contain an undirected cycle. We call any such $\alpha_F$ a \emph{vertex-orientation map} for $F$ and any edge $e \in E$ \emph{active} in $F$ if $e \in E_F$. Analogously, for a vertex $v \in V$, we say that $v$ is \emph{active} in $F$ if $v \in V_F$.
	The \emph{size} of an $\alpha$-forest $F$ is defined by $|F| := |E_F|+ |V_F|$.
	If there is no $\alpha$-forest $F'$ in $G$ with $|F'| > |F|$, then we say that $F$ is \emph{maximal}.
\end{definition}

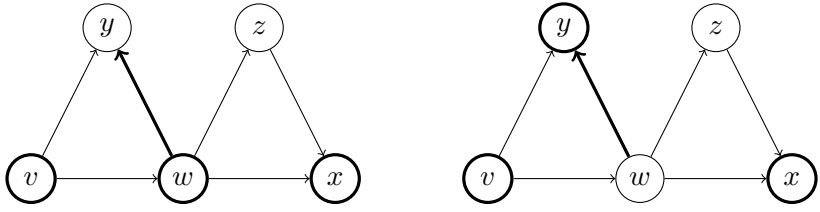
\begin{figure}
	\centering
	\begin{subfigure}{0.4\textwidth}
	\begin{tikzpicture}
		\node[gridnode, very thick] (v) at (0,0) {$v$};
		\node[gridnode, very thick] (w) at (2,0) {$w$};
		\node[gridnode, very thick] (x) at (4,0) {$x$};
		\node[gridnode] (y) at (1,2) {$y$};
		\node[gridnode] (z) at (3,2) {$z$};

		\draw (v)	edge[->] (w)
					edge[->] (y);
		\draw (w)	edge[->, very thick] (y)
					edge[->] (z)
					edge[->] (x);
		\draw (z)	edge[->] (x);
	\end{tikzpicture}
	\end{subfigure}
	\begin{subfigure}{0.4\textwidth}
	\begin{tikzpicture}
		\node[gridnode, very thick] (v) at (0,0) {$v$};
		\node[gridnode] (w) at (2,0) {$w$};
		\node[gridnode, very thick] (x) at (4,0) {$x$};
		\node[gridnode, very thick] (y) at (1,2) {$y$};
		\node[gridnode] (z) at (3,2) {$z$};

		\draw (v)	edge[->] (w)
					edge[->] (y);
		\draw (w)	edge[->, very thick] (y)
					edge[->] (z)
					edge[->] (x);
		\draw (z)	edge[->] (x);
	\end{tikzpicture}
	\end{subfigure}

	\caption{We visualize $\alpha$-forests by marking the sets of active vertices and edges in bold. The structure on the left is an $\alpha$-forest, as we can choose, \eg, $\alpha_F(v)=(v,w), \alpha_F(w)=(w,x), \alpha_F(x)=(z,x)$. The structure on the right, in contrast, is not an $\alpha$-forest: Since $\alpha_F$ needs to be injective and cannot map to edges already in $E_F$, it follows that $\alpha_F(y)=(v,y)$ and $\alpha_F(v)=(v,w)$, which closes a cycle.}
\label{fig:alpha-tree}
\end{figure}

Note that $E_F$ can contain edges between vertices not in $V_F$. Hence the vertices and edges in an $\alpha$-forest need not form a graph. Furthermore, for a given $\alpha$-forest $F$, the vertex-orientation map $\alpha_F$ is not necessarily unique.

Observe that any non-maximal $\alpha$-forest $F$ can be augmented to a maximal $\alpha$-forest: Let $\alpha_F$ be a corresponding vertex-orientation map. Since $F$ is non-maximal, there exists an $\alpha$-forest $F'$ with a corresponding vertex-orientation map $\alpha_F'$ such that $|E_{F'} \cup \alpha_{F'}(V_{F'})| > |E_F \cup \alpha_F(V_F)|$, where both sets do not contain an undirected cycle. From the augmentation property of the graphic matroid, we obtain that there is $e \in E_{F'} \cup \alpha_{F'}(V_{F'})$ such that $F^* := (E_F \cup \set{e}, V_F)$ is an $\alpha$-forest with $|F^*| > |F|$.

Finally, remember that we always assume that a network is weakly connected. Thus, we may call a maximal $\alpha$-forest an \emph{$\alpha$-tree} and, by the above observation, any $\alpha$-forest can be augmented to an $\alpha$-tree.

\begin{definition}[Conforming $\alpha$-forests]
	Given a feasible differential flow $f \in \QDCf(G)$, we say that the $\alpha$-forest $F$ \emph{conforms} with $f$ if
	\begin{itemize}
		\item $f_e \in \Set*{ \fl_e, \fu_e }$ for all $e \in E_F$
		\item $(-Af)_{v} \in \Set*{ \pl_{v}, \pu_{v} }$ for all $v \in V_F$.
	\end{itemize}

	Finally, $F$ is \emph{$f$-maximal} if there exists no $\alpha$-forest $F'$ conforming with $f$ such that $|F'| > |F|$.
\end{definition}

Remember that for the special case of $\pmax = (-\infty,\infty)$, we have already observed that $\QDCf(G)=Q_D(G)$ (with a suitable scaling of $\fl$ and $\fu$), \ie a feasible differential flow in that case is just a feasible differential. An $\alpha$-forest conforming with a flow $f$ is then simply a forest consisting of edges that are at their capacity limits. Moreover, we already know from \cref{sec:flows_and_differentials} that $f := A^\top\phi$ is extremal in $Q_D(G)$ if and only if there exists a spanning tree conforming with $f$ (note that there might be different trees characterizing the same solution).

For the general case, we start by formally defining the exact relation between $\alpha$-forests and points in
\begin{equation*}
	\QDCf(G) = \Set*{f \in \reals^E \given \exists \phi \in \reals^V:
	    \begin{pmatrix} 0 \\ p^- \\ f^- \end{pmatrix}  \le  \begin{pmatrix} B^{\top} & -I \\ 0 & -A \\ 0 & I \end{pmatrix} \begin{pmatrix}  \phi \\ f \end{pmatrix} \le \begin{pmatrix}  0 \\ p^+ \\ f^+ \end{pmatrix}}
\end{equation*}
that we would like to prove. Since the differential constraints are always active, a selection $\mathcal{B}$ of rows from the matrix
$\scolvec{-A\\I}$ uniquely determines $f \in \QDCf$ if and only if the matrix
	\begin{equation*}
		\colvec{B^\top & -I\\0 & \colvec{-A\\I}_{\mathcal{B}}}
	\end{equation*}
	has maximal rank, \ie if any vector $x$ satisfying
	\begin{equation*}
		\colvec{B^\top & -I\\0 & \colvec{-A\\I}_{\mathcal{B}}}x = 0
	\end{equation*}
	is of the form $x=(\phi,0)$ with $\phi := \lambda \cdot \mathbbm{1} \in \reals^V$ (\cf \cref{eq:all-ones}).

Another way to look at this is the following:
Recall from \cref{prop:ABprop} that $\rank(B)=|V|-1$ and hence we can alternatively consider the polyhedron $Q_\phi$ from \cref{eq:q_phi}. Then, an extremal point in $\QDCf(G)$ is uniquely determined by a subset of active constraints from
\begin{gather*}
	\fl \leq B^\top \phi \leq \fu \\ \pl \leq -AB^\top\phi \leq \pu
\end{gather*}
	such that the corresponding submatrix of $\scolvec{B^\top \\ AB^\top}$ has rank $|V|-1$.

  Before we delve into the question of when exactly an $\alpha$-tree conforming with a solution certifies that this solution is extremal, we first settle the inverse question:
  We prove that for every extremal solution $f \in \QDCf(G)$, we can find an $\alpha$-tree in $G$ that conforms with $f$ (\cref{thm:alpha-tree-ex}).
  In the following, let $N_G(v)$ denote the \emph{neighborhood} of $v$, i.e. $N_G(v):=\Set{w \in V \given (v,w) \in E \text{ or } (w,v) \in E}$.

\begin{lemma}\label{lem:at-matrix}
	Let $G=(V,E,b,\pmax,\fmax)$ be a network. Let $V^* := \Set{v_1,v_2,\dots,v_k} \subseteq V$ denote a set of $k$ vertices and let $E^* \subseteq E$ denote a set of $|V|-1-k$ edges such that
		\begin{equation}
			\rank \begin{pmatrix}
				(B^\top)_{E^*}\\
				(AB^\top)_{V^*}
			\end{pmatrix}=|V|-1.\label{eq:alpha_tree_rank}
		\end{equation}
	Then, $G^* := (V,E^*)$ has $k+1$ connected components and, denoting the corresponding vertex sets by $S_1, \dotsc, S_{k+1}$, there exists a $k \times (k+1)$-matrix $C = (\gamma_{ij})_{\substack{i \in \ints{k}\\j \in \ints{k+1}}}$ with
		\begin{equation*}
			\gamma_{ij} \begin{cases}
						> 0 & \text{if $v_i \in S_j$}\\
						< 0 & \text{if $v_i \notin S_j$ and $N_G(v_i) \cap S_j \neq \emptyset$}\\
						0	& \text{else.}
						\end{cases}
		\end{equation*}
    such that $\rank(C)=k$ and $C \cdot \mathbbm{1} = 0$.
\end{lemma}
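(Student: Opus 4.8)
The plan is to read off $C$ from the way the rows $(AB^\top)_{V^*}$ act on the kernel of $(B^\top)_{E^*}$. First I would note that the matrix in \cref{eq:alpha_tree_rank} has exactly $(|V|-1-k)+k = |V|-1$ rows, so the rank hypothesis forces all of them to be linearly independent; in particular $(B^\top)_{E^*}$ has full row rank $|V|-1-k$. Since $B^\top=\diag(b)A^\top$ and $b>0$, this rank equals the rank of the submatrix of $A$ formed by the columns indexed by $E^*$, which is the incidence matrix of $G^*=(V,E^*)$ and thus has rank $|V|-c$, where $c$ is the number of connected components of $G^*$. Hence $|V|-c=|V|-1-k$, i.e.\ $c=k+1$, which proves the first claim; write $S_1,\dots,S_{k+1}$ for the component vertex sets.

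A vector $x$ lies in $\ker\big((B^\top)_{E^*}\big)$ iff $b_e(x_w-x_v)=0$ for every $e=(v,w)\in E^*$, i.e.\ iff $x$ is constant on each $S_j$; thus $\{\mathbbm{1}_{S_1},\dots,\mathbbm{1}_{S_{k+1}}\}$ is a basis of this kernel. I would then define $C$ by $\gamma_{ij}:=(AB^\top\mathbbm{1}_{S_j})_{v_i}$, so that $C$ is precisely the matrix (in this basis) of the linear map $x\mapsto(AB^\top)_{V^*}x$ restricted to $\ker\big((B^\top)_{E^*}\big)$. Two of the required properties are then immediate. Since $\sum_j\mathbbm{1}_{S_j}=\mathbbm{1}$ and $AB^\top\mathbbm{1}=0$ by \cref{prop:ABprop}, we get $C\mathbbm{1}=0$. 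For the rank, the kernel of the restricted map is $\ker\big((B^\top)_{E^*}\big)\cap\ker\big((AB^\top)_{V^*}\big)$, which equals the kernel of the full matrix in \cref{eq:alpha_tree_rank} and is therefore one-dimensional, spanned by $\mathbbm{1}$; a map from a $(k+1)$-dimensional space with one-dimensional kernel has rank $k$, so $\rank(C)=k$.

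It remains to check the sign pattern, which is where the real work lies. Using that $AB^\top=A\diag(b)A^\top$ is the weighted Laplacian of $G$ (diagonal entry $\sum_{e\ni u}b_e$, off-diagonal entry $-b_e$ across an edge $e$), a direct computation gives $\gamma_{ij}=\sum b_e$ over the edges incident to $v_i$ whose other endpoint lies outside $S_j$ when $v_i\in S_j$, and $\gamma_{ij}=-\sum b_e$ over the edges joining $v_i$ to $S_j$ when $v_i\notin S_j$. As $b>0$, the second case is negative exactly when $N_G(v_i)\cap S_j\neq\emptyset$ and zero otherwise, matching the last two prescribed cases. The main obstacle is the \emph{strict} positivity in the first case: a priori $\gamma_{ij}$ could vanish if $v_i\in S_j$ had all of its $G$-neighbours inside $S_j$. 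I would exclude this using the rank hypothesis. The row space of $(B^\top)_{E^*}$ is exactly $\{y\in\reals^V : \sum_{u\in S_l}y_u=0\text{ for all }l\}$ (the orthogonal complement of the kernel described above). If $v_i$ had no edge leaving $S_j$, then the $v_i$-th row of $AB^\top$ would be supported on $S_j$ and, having zero total sum, would satisfy $\sum_{u\in S_l}y_u=0$ for every $l$; it would thus lie in the row space of $(B^\top)_{E^*}$, contradicting the linear independence of all $|V|-1$ rows established in the first step. Hence each $v_i$ has an edge leaving its component, so $\gamma_{ij}>0$ when $v_i\in S_j$, which completes the verification.
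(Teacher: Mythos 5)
Your proof is correct, and in fact it constructs exactly the same matrix as the paper: your $\gamma_{ij}=(AB^\top\mathbbm{1}_{S_j})_{v_i}$ coincides entry-by-entry with the paper's $\gamma_{ij}=\sum_{w\in S_j}(AB^\top)_{v_iw}$, which the paper obtains as the outcome of explicit row reduction. The difference lies in how the properties of $C$ are established. The paper works coordinate-wise: it reorders columns by components, rescales the edge rows, and then adds multiples of the spanning-tree rows $(A^\top)_{S_j}$ to each Laplacian row so that all within-component entries collapse onto a representative column $w_j$; full row rank of the reduced matrix (preserved under row operations) then yields both $\rank(C)=k$ and the strict positivity, since a zero row would contradict independence. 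You instead work invariantly: $C$ is the matrix of $(AB^\top)_{V^*}$ restricted to $\ker\bigl((B^\top)_{E^*}\bigr)$ in the basis of component indicators, so $\rank(C)=k$ becomes a one-line dimension count (the kernel of the restricted map is the kernel of the full matrix, which is $\mathrm{span}(\mathbbm{1})$), and strict positivity follows because a Laplacian row supported inside its own component would lie in the orthogonal complement of $\ker\bigl((B^\top)_{E^*}\bigr)$, i.e., in the row space of $(B^\top)_{E^*}$, contradicting the independence of all $|V|-1$ rows. The two rank/positivity arguments are equivalent in substance --- your row-space containment is precisely the paper's ``the reduced row would be zero'' --- but your packaging avoids the bookkeeping with representatives $w_j$ and makes the rank claim cleaner, at the price of invoking the identity between row space and the orthogonal complement of the kernel, and the weighted-Laplacian form of $AB^\top$, which the paper's computation re-derives by hand.
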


\begin{proof}
	The rank condition \cref{eq:alpha_tree_rank} implies in particular that the rows of $B^\top$ which correspond to the edges in $E^*$ are linearly independent. Since $B$ is (up to linear scaling of columns) the incidence matrix of the graph $(V,E)$, this implies that the graph $(V,E^*)$ is acyclic and consists of $k+1$ connected components.
Let $M := \scolvec{	(B^\top)_{E^*}\\(AB^\top)_{V^*}}$.
	We assume \Wlog~that the columns of $M$ are ordered in such a way that those corresponding to vertices in $S_1$ come first, followed by the columns corresponding to vertices in $S_2$ and so on. We now scale every row of $(B^\top)_{E^*}$ in such a way that its two entries are $1$ and $-1$ (\eg~we divide the row corresponding to the edge $e$ by $b_e$).
	We obtain a matrix of the following structure:
	\begin{equation*}
		M' = \begin{pmatrix}
				(A^\top)_{S_1}	&0				& \dotsb	&0\\
				0				&(A^\top)_{S_2}	& \dotsb	&0\\
				\vdots			&\vdots			&\ddots	&\vdots\\
				0				&0				& \dotsb	&(A^\top)_{S_{k+1}}\\\hline
				\multicolumn{4}{c}{M^*}
			\end{pmatrix}.
	\end{equation*}
	Obviously, we have that $\rank(M)=\rank(M')$ and each of the blocks $(A^\top)_{S_j}$ is (the transpose of) the incidence matrix of the connected component of $G^*$ with vertex set $S_j$, which is a tree spanning the set $S_j$.
	
	The matrix $M^*$, on the other hand, is a selection of rows from the nodal admittance matrix $AB^\top$, with each row corresponding to a vertex $v_i \in V^*$. For every $v_i \in V^*, w \in V$, it therefore holds that
	\begin{equation*}
		M^*_{v_iw} = \begin{cases}
						\sum_{e \in \delta(v_i)} b_e & w=v_i\\
						-b_{v_iw} & (v_i,w) \in E\\
						-b_{wv_i} & (w,v_i) \in E\\
						0 & \text{else.}
					\end{cases}
	\end{equation*}
	In particular, $M^*_{v_iv_i} > 0$, $M^*_{v_iw} \leq 0$ for all $w \neq v_i$ and $\sum_{w \in V} M^*_{v_iw} = 0$.

	We now select for each connected component of $G^*$ a representative vertex $w_j \in  S_j$. Remember that $(A^\top)_{S_j}$ is the transpose of the incidence matrix of a spanning tree on the set $S_j$. Adding a multiple $\lambda$ of the row of $(A^\top)_{S_j}$ corresponding to the edge $(v,w) \in E$ to a row $m$ of $M^*$ thus subtracts $\lambda$ from the entry $M^*_{mv}$ and adds $\lambda$ to the entry $M^*_{mw}$. Moving along the edges in the spanning tree of $S_j$, we can hence add a suitable linear combination of rows of $(A^\top)_{S_j}$ to each row $m$ of $M^*$, eliminating all entries of $M^*$ from columns corresponding to vertices in $S_j\setminus{w_j}$ and adding the corresponding values to the entry $M^*_{mw_j}$. We obtain a matrix $M^{**}$ such that
	\begin{equation*}
		\rank(M) = \rank(M') = \rank\begin{pmatrix}
				(A^\top)_{S_1}	&0				& \dotsb	&0\\
				0				&(A^\top)_{S_2}	& \dotsb	&0\\
				\vdots			&\vdots			&\ddots	&\vdots\\
				0				&0				& \dotsb	&(A^\top)_{S_{k+1}}\\\hline
				\multicolumn{4}{c}{M^{**}}
			\end{pmatrix},
	\end{equation*}
	where $M^{**}$ is of the form
	\begin{equation*}
		\kbordermatrix{
					~ & w_1 & ~ &  & ~ & w_2 & ~ & ~ & ~ & \dotsb & ~ & ~ & ~ & w_{k+1} \\
					v_1 & \gamma_{11}	& 0		& \dotsb & 0		& \gamma_{12}	& 0		& \dotsb & 0		& \dotsb &	0		& \dotsb & 0		& \gamma_{1(k+1)}\\
					v_2 & \gamma_{21}	& 0		& \dotsb & 0		& \gamma_{22}	& 0		& \dotsb & 0		& \dotsb &	0		& \dotsb & 0		& \gamma_{2(k+1)}\\
					\vdots & \vdots 		&\vdots	& \ddots & \vdots	& \vdots 		&\vdots	& \ddots & \vdots	&		 &	\vdots	& \ddots & \vdots	& \vdots\\
					v_k & \gamma_{k1}	& 0		& \dotsb & 0		& \gamma_{k2}	& 0		& \dotsb & 0		& \dotsb &	0		& \dotsb & 0		& \gamma_{k(k+1)}\\
				}.
	\end{equation*}

	Since the row-sums of all rows of $M'$ were 0, the same holds true for $M^{**}$, \ie $\sum_{j \in \ints{p}} \gamma_{ij} = 0$ for all $i \in \ints{k}$. Furthermore, $\gamma_{ij} = \sum_{w \in S_j} M^*_{v_i,w} \leq 0$ for all $S_j$ that do not contain $v_i$ and $\gamma_{ij} = 0$ for all $S_j$ that do not contain a neighbor of $v_i$. This implies that, if $v_i \in S_{j^*}$, then $\gamma_{ij^*} > 0$: Otherwise, all entries of the row $M^{**}_{v_i}$ would be $0$, a contradiction with $M^{**}$ having full row rank. We can now drop all zero columns from $M^{**}$ to obtain the matrix $C$ with the desired properties.
\end{proof}

In preparation of Theorem \ref{thm:alpha-tree-ex}, the following lemma will serve to capture the relation between the different connected components of $G^*$ and active vertices within these components. It uses a bipartite graph $H := (W \cupdot S, R \cupdot U)$, where the set $W$ will be used to represent the set $V^*$ of \emph{active} vertices in a network while the set $S$ represents the set of components connected by \emph{active} edges in $E^*$ (the connected components of the graph $G^*$). An edge $\Set{v,s} \in R$ will mean that the vertex in $V^*$ which corresponds to $v$ lies in the connected component that corresponds to $s$. The set $U$ of edges represents possible connections by edges in $E \setminus E^*$ between vertices in $V^*$ and connected components of $G^*$ that the vertex does not belong to. A selection $U^*$ of edges in $U$ finally will represent the set of connections between connected components of $G^*$ actually made by active vertices through the vertex-orientiation map of an $\alpha$-forest (see \cref{fig:at-graph}).

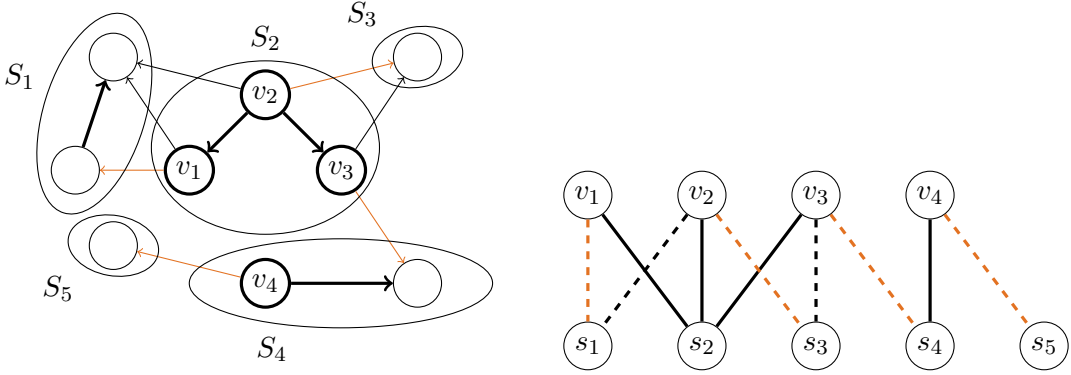
\begin{figure}
\centering
\begin{subfigure}{0.5\textwidth}
\begin{tikzpicture}
	\node[gridnode, very thick] (v1) at (0,0) {$v_1$};
	\node[gridnode, very thick] (v2) at (1,1) {$v_2$};
	\node[gridnode, very thick] (v3) at (2,0) {$v_3$};
	\node[gridnode, very thick] (v4) at (1,-1.5) {$v_4$};

	\node[gridnode] (s11) at (-1.5,0) {};
	\node[gridnode] (s12) at (-1,1.5) {};
	\node[gridnode] (s3) at (3,1.5) {};
	\node[gridnode] (s4) at (3,-1.5) {};
	\node[gridnode] (s5) at (-1,-1) {};

	\draw[very thick]	(s11)	edge[->] (s12)
						(v2)	edge[->] (v1)
								edge[->] (v3)
						(v4)	edge[->] (s4);
	\draw				(v1)	edge[->, accentuating orange] (s11)
								edge[->] (s12)
						(v2)	edge[->] (s12)
								edge[->, accentuating orange] (s3)
						(v3)	edge[->] (s3)
								edge[->, accentuating orange] (s4)
						(v4)	edge[->, accentuating orange] (s5);

	\node[inner sep=-1mm, fit=(s11)(s12), draw, ellipse, rotate=-17, label={left:$S_1$}] {};
	\node[draw, ellipse, minimum width=12mm, minimum height=8mm, rotate=10, label={above left:$S_3$}] at (s3) {};
	\node[inner sep=0.8mm, fit=(v4)(s4), draw, ellipse, label={below left:$S_4$}] {};
	\node[draw, ellipse, minimum width=12mm, minimum height=8mm, rotate=-10, label={below left:$S_5$}] at (s5) {};
	\node[draw, ellipse, minimum width=30mm, minimum height=23mm, label=above:$S_2$] at (1,0.3) {};
\end{tikzpicture}
\end{subfigure}
\begin{subfigure}{0.45\textwidth}
\begin{tikzpicture}[xscale=1.5, yscale=2]
	\node[gridnode] (v1) at (0,0) {$v_1$};
	\node[gridnode] (v2) at (1,0) {$v_2$};
	\node[gridnode] (v3) at (2,0) {$v_3$};
	\node[gridnode] (v4) at (3,0) {$v_4$};

	\node[gridnode] (s1) at (0,-1) {$s_1$};
	\node[gridnode] (s2) at (1,-1) {$s_2$};
	\node[gridnode] (s3) at (2,-1) {$s_3$};
	\node[gridnode] (s4) at (3,-1) {$s_4$};
	\node[gridnode] (s5) at (4,-1) {$s_5$};

	\draw[very thick]	(s2)	edge (v1)
								edge (v2)
								edge (v3)
						(s4)	edge (v4);
	\draw[very thick, dashed]	(v1)	edge[accentuating orange] (s1)
								(v2)	edge (s1)
										edge[accentuating orange] (s3)
								(v3)	edge (s3)
										edge[accentuating orange] (s4)
								(v4)	edge[accentuating orange] (s5);
\end{tikzpicture}
\end{subfigure}
\caption{On the left, a network with the sets $V^*$ and $E^*$ shown in bold, as well as the partition of vertices induced by the connected components of the graph $G^* = (V, E^*)$. On the right, the corresponding bipartite graph $H := (W \cupdot S, R \cupdot U)$ satisfying the conditions from \cref{lem:at-graph}. Solid edges are those from the set $R$, dashed edges are those from the set $U$. In orange, a selection of edges $U^*$ guaranteed by \cref{lem:at-graph} is shown. On the left, a possible translation into an $\alpha$-forest's vertex-orientation map $\alpha$ (see \cref{thm:alpha-tree-ex}) is indicated by the orange edges ($\alpha$ maps every active vertex to the neighboring orange edge).}
\label{fig:at-graph}
\end{figure}

Similarly to our definitions of $\delta^\inn(v)$ and $\delta^\outn(v)$, we denote in the case of an undirected graph by $\delta(v)$ the set of edges incident with $v$. Furthermore, we define the neighborhood of a set of vertices by $N_G(W):=\Set{v \in V \setminus W \given \exists w \in W: \Set{v,w} \in E}$.

\begin{lemma}\label{lem:at-graph}
	Let $H := (W \cupdot S, R \cupdot U)$ be a bipartite graph with $|S| = |W| + 1$ satisfying the following conditions:
	\begin{enumerate}
		\item\label{it:at-degree} $\delta(v) \cap R = 1$ for all $v \in W$
		\item\label{it:at-subset} $|N_H(W')| \geq |W'| + 1$ for every $W' \subseteq W$.
	\end{enumerate}
	Then, there exists a selection of edges $U^* \subseteq U$ such that
	\begin{enumerate}[label=\roman*)]
		\item\label{it:at-single} $\delta(v) \cap U^* = 1$ for all $v \in W$ and
		\item\label{it:at-connected} the graph $(W \cupdot S, R \cupdot U^*)$ is connected (and therefore a tree).
	\end{enumerate}
\end{lemma}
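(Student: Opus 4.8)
The plan is to reduce the statement to a question about spanning trees of a graphic matroid on $S$ and then invoke Rado's theorem (the matroid generalisation of Hall's theorem). First I set up notation. By condition~\ref{it:at-degree} every $v \in W$ has a unique $R$-neighbour, which I denote $s(v) \in S$; since $H$ is bipartite with all $R$-edges running between $W$ and $S$, this gives $|R| = |W|$. A selection $U^* \subseteq U$ satisfying \ref{it:at-single} picks exactly one $U$-edge at each $v$, so $|U^*| = |W|$. Writing $t(v)$ for the $S$-endpoint of the chosen edge of $U^*$ at $v$ (note $t(v)\neq s(v)$, as $R$ and $U$ are disjoint), every $v \in W$ is an internal degree-$2$ vertex of the path $s(v)$--$v$--$t(v)$. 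Suppressing these degree-$2$ vertices preserves connectivity, so $(W\cupdot S, R\cupdot U^*)$ is connected if and only if the multigraph on $S$ with edge set $\Set{e_v := \Set{s(v),t(v)} \given v\in W}$ is connected. As this multigraph has $|W|$ edges on $|S| = |W|+1$ vertices, connectivity is equivalent to its being a spanning tree; moreover, in the bipartite graph, connectivity together with the edge count $|R|+|U^*| = 2|W| = (|W|+|S|)-1$ forces a tree, settling the parenthetical claim in \ref{it:at-connected}. Hence it suffices to choose, for each $v$, a $U$-neighbour $t(v)$ so that the edges $e_v$ form a spanning tree of $S$.

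I next phrase this as an independent-transversal problem. For $v\in W$ let $E_v := \Set{\Set{s(v),t} \given \Set{v,t}\in U}$ be its set of admissible $S$-edges; applying condition~\ref{it:at-subset} to the singleton $\Set{v}$ gives $|N_H(\Set{v})|\ge 2$, so $E_v\neq\emptyset$. Let $M$ be the graphic matroid of the complete graph on $S$, which has rank $|S|-1 = |W|$. A system of representatives $e_v\in E_v$ that is independent in $M$ and has the maximal size $|W|$ is precisely a base of $M$, i.e.\ a spanning tree (its edges being automatically distinct and acyclic). By Rado's theorem, such an independent transversal exists if and only if
\[
	r_M\Bigl(\bigcup_{v\in W'}E_v\Bigr)\ \ge\ |W'| \qquad\text{for every } W'\subseteq W ,
\]
where $r_M$ denotes the rank function of $M$.

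Verifying this rank inequality is the crux. The key observation is that $\bigcup_{v\in W'}E_v$ is a union of stars, the star of $v$ being centred at $s(v)$, and that the set of $S$-vertices it touches is exactly $N_H(W')$ (the $R$-neighbours supply the centres $s(v)$, the $U$-neighbours the leaves), none of them isolated. Let $C_1,\dots,C_c$ be the vertex sets of the connected components of this union of stars. Since each star lies inside a single component, the sets $W'_i := \Set{v\in W' \given s(v)\in C_i}$ partition $W'$, and one checks that $C_i$ is precisely the set of vertices touched by the stars of $W'_i$, i.e.\ $C_i = N_H(W'_i)$. Applying condition~\ref{it:at-subset} to each $W'_i$ gives $|C_i| = |N_H(W'_i)|\ge |W'_i|+1$, and summing over $i$ yields $|N_H(W')| = \sum_i|C_i|\ge |W'| + c$. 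Because the rank of a union of stars equals its number of non-isolated vertices minus its number of components, $r_M\bigl(\bigcup_{v\in W'}E_v\bigr) = |N_H(W')| - c \ge |W'|$, as required.

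Rado's theorem then supplies representatives $e_v = \Set{s(v),t(v)}$ forming a spanning tree of $S$; setting $U^* := \Set{\Set{v,t(v)} \given v\in W}$ gives $\delta(v)\cap U^* = 1$ for all $v$, which is \ref{it:at-single}, and the reduction of the first paragraph turns the spanning-tree property into connectivity, which is \ref{it:at-connected}. The main obstacle is the rank verification: the bound $|N_H(W')|\ge|W'|+1$ from condition~\ref{it:at-subset} only controls the number of touched vertices, whereas the graphic rank is additionally diminished by the number of components, so one must re-apply the expansion hypothesis componentwise to the partition $\Set{W'_i}$ to absorb exactly that defect.
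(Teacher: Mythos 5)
Your proof is correct, but it takes a genuinely different route from the paper. You reduce the statement to an independent-transversal problem in the graphic matroid of the complete graph on $S$ (after suppressing the degree-$2$ vertices of $W$) and invoke Rado's theorem, with the real work being the rank verification: the expansion hypothesis $|N_H(W')| \geq |W'|+1$ only bounds the number of touched vertices, and you recover the graphic rank $|N_H(W')| - c$ by partitioning $W'$ according to the connected components of the union of stars and re-applying the hypothesis to each part $W'_i$, so that the $c$ surplus units of ``$+1$'' exactly absorb the component-count defect. This is clean and conceptually transparent; it also explains structurally why the ``$+1$'' in the hypothesis is precisely the right strength. The paper instead gives a self-contained elementary argument by induction on $|U|$: in the base case $|U|=|W|$ it derives connectivity directly from the expansion condition, and in the inductive step it picks a vertex $v$ with two $U$-edges $e_1,e_2$ and shows, via a careful inclusion--exclusion analysis of two hypothetical violating sets $W_1,W_2$ (both necessarily containing $v$), that at least one of $H-e_1$, $H-e_2$ still satisfies the expansion condition. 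Your approach buys brevity and a matroid-theoretic viewpoint (the lemma becomes a special case of Rado's theorem), at the cost of importing that nontrivial black box; the paper's proof is longer and more intricate but entirely elementary, requiring no machinery beyond Hall-type counting. Two small points to make explicit in your write-up: each $W'_i$ in your partition is nonempty (every component of the star union contains at least one star center), so the hypothesis applies to it; and the distinctness of the representatives $e_v$ delivered by Rado's theorem is genuinely needed, since a repeated edge $\{s(v),t(v)\}=\{s(v'),t(v')\}$ would create a parallel pair in the multigraph on $S$ and destroy the tree property --- your argument does handle both, but only implicitly.
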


\begin{proof}
	We first observe that \cref{it:at-subset}, together with \cref{it:at-degree}, implies in particular that $\delta(v) \cap U \geq 1$ for all $v \in W$. Thus, $|U| \geq |W|$ and since $H$ is bipartite, a selection $U^* \subseteq U$ of edges with $\delta(v) \cap U^* = 1$ always exists.

	We prove the statement by induction over $|U|$. If $|U| = |W|$, then every vertex $v \in W$ is incident with exactly one edge from $U$. The selection $U^* := U$ thus satisfies \cref{it:at-single}. If we suppose that \cref{it:at-connected} does not hold, \ie~$(W \cupdot S, R \cupdot U)$ is not connected, then we can choose a connected component $C$ and denote by $W_C$ the subset of vertices from $W$ that is covered by $C$. Since $C$ is a connected component, $N_H(W_C) \cap N_H(W \setminus W_C) = \emptyset$ and by \cref{it:at-subset} we obtain $|N_H(W_C)| > |W_C|$. Hence,
	\begin{equation*}
		|N_H(W \setminus W_C)| \leq |S \setminus N_H(W_C)| = |S| - |N_H(W_C)| < |W| + 1 - |W_C| = |W \setminus W_C| + 1,
	\end{equation*}
	a contradiction to \cref{it:at-subset}.

	Now, let us assume that $|U| > |W|$, which means that there exists $v \in W$ with $\delta(v) \cap U \geq 2$. Choose two distinct edges $e_1,e_2 \in \delta(v) \cap U$ and denote the corresponding neighbors of $v$ in $S$ by $s_1$ and $s_2$, respectively.

	We claim that at least one of the two graphs $H_1 := (W \cupdot S, R \cupdot U \setminus \Set{e_1})$ and $H_2 := (W \cupdot S, R \cupdot U \setminus \set{e_2})$ must satisfy the condition \cref{it:at-subset}. By induction, we then obtain a selection $U^*$ of edges which satisfies \cref{it:at-single,it:at-connected} with respect to the graph $H_1$ (or $H_2$) and thus also for $H$, proving the statement.

	For a contradiction, suppose otherwise. Then, there exists a set $W_1 \subset W$ which violates \cref{it:at-subset} in the graph $H_1$ and a set $W_2 \subset W$ which violates \cref{it:at-subset} within $H_2$. Since neither $W_1$ nor $W_2$ violate \cref{it:at-subset} within $H$, it must hold that $v \in W_1 \cap W_2$.

	Now, let $W'_1 := W_1 \setminus W_2$, $W'_2 := W_2 \setminus W_1$, and $W_{12} := W_1 \cap W_2 \setminus \Set{v}$. We use the following statements, which we successively derive from each other below:
	\begin{enumerate}[label=\Roman*)]
		\item\label{it:atg-h1v1} $|N_{H_1}(W_1)| = |W_1| = |N_H(W_1)| - 1$ and\\
				$|N_{H_2}(W_2)| = |W_2| = |N_H(W_2)| - 1$
		\item\label{it:atg-v1*} $|N_H(W'_1) \cap N_H(W_{12})| \geq |N_H(W'_1)| + |N_H(W_{12})| - |W'_1| - |W_{12}| - 1$ and\\
				$|N_H(W'_2) \cap N_H(W_{12})| \geq |N_H(W'_2)| + |N_H(W_{12})| - |W'_2| - |W_{12}| - 1$
		\item\label{it:atg-v12} $|N_H(W'_1 \cupdot W_{12} \cupdot W'_2)| \leq |W'_1| + |W_{12}| + |W'_2| + 1$
		\item\label{it:atg-nv1} $N_H(v) \setminus \Set{s_1} \subset N_H(W_1 \setminus \Set{v})$ and
				$N_H(v) \setminus \Set{s_2} \subset N_H(W_2 \setminus \Set{v})$
	\end{enumerate}
	Statements \cref{it:atg-v12} and \cref{it:atg-nv1} together will then prove that the original graph $H$ would already violate \cref{it:at-subset}, a contradiction.

	To prove \cref{it:atg-h1v1} and \cref{it:atg-v1*}, let $i \in \Set{1,2}$. From the assumption that $W_i$ violates \cref{it:at-subset} in the graph $H_i$, but satisfies \cref{it:at-subset} in the graph $H$, we can immediately follow that $|N_{H_i}(W_i)| \leq |W_i| \leq |N_H(W_i)| - 1$. On the other hand, $H_i$ lacks only a single edge compared with $H$ and thus $|N_{H_i}(W_i)| \geq |N_H(W_i)|-1$. This proves \cref{it:atg-h1v1}.

	We can now conclude that
	\begin{align*}
		|W'_i| + |W_{12}| + 1
		&= |W_i| \overset{\cref{it:atg-h1v1}}{\geq} |N_{H_i}(W_i)| \geq |N_{H_i}(W'_i \cupdot W_{12})| = |N_H(W'_i \cupdot W_{12})|\\
			&= |N_H(W'_i)| + |N_H(W_{12})| - |N_H(W'_i) \cap N_H(W_{12})|,
	\end{align*}
	which proves \cref{it:atg-v1*}.

	Using the inclusion/exclusion principle, it follows
	\begin{align*}
		|N_H(W'_1 \cupdot W_{12} \cupdot W'_2)|
		&= |N_H(W'_1)| + |N_H(W_{12})| + |N_H(W'_2)|\\
			&\qquad - |N_H(W'_1) \cap N_H(W_{12})|- |N_H(W'_2) \cap N_H(W_{12})| \\
			&\qquad \underbrace{- |N_H(W'_1) \cap N_H(W'_2)| + |N_H(W'_1) \cap N_H(W'_2) \cap N_H(W_{12})|}_{\leq 0}\\
		&\overset{\mathclap{\cref{it:atg-v1*}}}{\leq}|N_H(W'_1)| + |N_H(W_{12})| + |N_H(W'_2)|\\
			&\qquad - (|N_H(W'_1)| + |N_H(W_{12})| - |W'_1| - |W_{12}| - 1) \\
			&\qquad - (|N_H(W'_2)| + |N_H(W_{12})| - |W'_2| - |W_{12}| - 1)\\
		&= - |N_H(W_{12})| + |W'_1| + |W_{12}| + |W'_2| + |W_{12}| + 2\\
		&\overset{\cref{it:at-subset}}{\leq}
		|W'_1| + |W_{12}| + |W'_2| + 1,
	\end{align*}
	which proves \cref{it:atg-v12}.

  To prove \cref{it:atg-nv1} let again $i \in \Set{1,2}$ and observe that $s_i \notin N_H(W_i \setminus \Set{v})$, since otherwise $|N_{H_i}(W_i)| = |N_H(W_i)|$, contradicting \cref{it:atg-h1v1}. Furthermore,
	\begin{align*}
		|N_H(W_i) \setminus N_H(W_i \setminus \Set{v})| &= |N_H(W_i)| - |N_H(W_i \setminus \Set{v})|\\
			&\overset{\mathclap{\cref{it:atg-h1v1},\cref{it:at-subset}}}{\leq}\, (|N_{H_i}(W_i)| + 1) - (|W_i \setminus \Set{v}| + 1)\\
			&\overset{\cref{it:atg-h1v1}}{\leq} |W_i| + 1 - (|W_i| - 1 + 1) = 1,
	\end{align*}
	which shows $N_H(W_i) \setminus N_H(W_i \setminus \Set{v}) = \Set{s_i}$ and hence $N_H(v) \setminus \Set{s_i} \subset N_H(W_i \setminus \Set{v})$,
	which proves \cref{it:atg-nv1}.

	In particular, since $s_1,s_2 \in N_H(v)$, this means that $s_2 \in N_H(W_1 \setminus \Set{v})$ and $s_1 \in N_H(W_2 \setminus \Set{v})$.
	But this implies that
	\begin{equation*}
		N_H(v) \overset{\cref{it:atg-nv1}}{\subset} N_H(W_1 \setminus \Set{v}) \cup N_H(W_2 \setminus \Set{v}) = N_H((W_1 \cup W_2) \setminus\Set{v}) = N_H(W'_1 \cupdot W_{12} \cupdot W'_2),
	\end{equation*}
	which results into
	\begin{align*}
		|N_H(W'_1 \cupdot W_{12} \cupdot W'_2 \cupdot \Set{v})| &= |N_H(W'_1 \cupdot W_{12} \cupdot W'_2)|\\
			&\overset{\mathclap{\cref{it:atg-v12}}}{=}\, |W'_1| + |W_{12}| + |W'_2| + 1 = |W'_1 \cupdot W_{12} \cupdot W'_2 \cupdot \Set{v}|,
	\end{align*}
	a contradiction with \cref{it:at-subset}.

	It follows that our assumption was false and that indeed, at least one of the graphs $H_1$ or $H_2$ satisfies \cref{it:at-subset}. As the set $R$ remains unchanged that graph obviously satisfies \cref{it:at-degree}, too. Thus the statement follows by induction.
\end{proof}

\begin{theorem}\label{thm:alpha-tree-ex}
	Let $G=(V,E,b,\pmax,\fmax)$ be a network and $f \in \QDCf(G)$. If $f$ is extremal, then there exists an $\alpha$-tree in $G$ that conforms with $f$.
\end{theorem}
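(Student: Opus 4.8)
The plan is to read the desired combinatorial structure off a basis of active constraints at the extremal point. Since $f$ is extremal, the discussion following \cref{eq:q_phi} provides a potential $\phi$ with $f=B^\top\phi$ together with a collection of active constraints among $\fl\le B^\top\phi\le\fu$ and $\pl\le -AB^\top\phi\le\pu$ whose rows in $\scolvec{B^\top\\AB^\top}$ have rank $|V|-1$. First I would greedily select a maximal linearly independent set $E^*\subseteq E$ of \emph{active} edge rows $(B^\top)_e$; by \cref{prop:ABprop} such rows are independent exactly when the edges $E^*$ form a forest. I would then extend $E^*$ by active vertex rows $(AB^\top)_v$, indexed by a set $V^*$ with $|V^*|=:k$, to a basis of the full (rank $|V|-1$) active row space. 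Because $E^*$ was chosen maximal among the edge rows, this extension can use only vertex rows, so $|E^*|=|V|-1-k$ and $\rank\scolvec{(B^\top)_{E^*}\\(AB^\top)_{V^*}}=|V|-1$. By construction $f_e\in\Set{\fl_e,\fu_e}$ for every $e\in E^*$ and $(-Af)_v\in\Set{\pl_v,\pu_v}$ for every $v\in V^*$, so the pair $F:=(E^*,V^*)$ will conform with $f$ as soon as it is shown to be an $\alpha$-forest.

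Now $V^*$ and $E^*$ meet exactly the hypotheses of \cref{lem:at-matrix}, which I would invoke to obtain the $k+1$ connected components $S_1,\dots,S_{k+1}$ of $G^*=(V,E^*)$ and the $k\times(k+1)$ matrix $C=(\gamma_{ij})$ of full row rank $k$ with $C\mathbbm{1}=0$ and the stated sign pattern. From $C$ I would build the bipartite graph $H=(W\cupdot S,R\cupdot U)$ of \cref{lem:at-graph}, setting $W:=V^*$ and $S:=\Set{S_1,\dots,S_{k+1}}$, placing a membership edge $\Set{v_i,S_j}\in R$ whenever $\gamma_{ij}>0$ (equivalently $v_i\in S_j$) and an edge $\Set{v_i,S_j}\in U$ whenever $\gamma_{ij}<0$ (equivalently $v_i\notin S_j$ but $N_G(v_i)\cap S_j\neq\emptyset$); thus $N_H(v_i)$ is precisely the support of the $i$-th row of $C$. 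Condition \cref{it:at-degree} is immediate since $v_i$ lies in a unique component. The crux, and what I expect to be the main obstacle, is the expansion condition \cref{it:at-subset}. Here I would argue that for nonempty $W'\subseteq W$ the rows of $C$ indexed by $W'$ are supported on the columns $N_H(W')$, hence span a space of dimension at most $|N_H(W')|$; full row rank forces $|N_H(W')|\ge|W'|$, and in the tight case the square submatrix $C'$ on rows $W'$ and columns $N_H(W')$ is invertible, while $C\mathbbm{1}=0$ restricts to $C'\mathbbm{1}=0$ — a contradiction. Therefore $|N_H(W')|\ge|W'|+1$.

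Finally I would apply \cref{lem:at-graph} to obtain a selection $U^*\subseteq U$ with $\delta(v)\cap U^*=1$ for all $v\in W$ such that $(W\cupdot S,R\cupdot U^*)$ is a tree, and translate it into a vertex-orientation map: for each $v_i$ its unique $U^*$-edge $\Set{v_i,S_j}$ satisfies $N_G(v_i)\cap S_j\neq\emptyset$, so I may pick an actual edge $\alpha_F(v_i)\in E\setminus E^*$ of $G$ joining $v_i$ to $S_j$ (it crosses components, hence is not in $E^*$). Both injectivity of $\alpha_F$ and acyclicity of $E^*\cup\alpha_F(V^*)$ then follow from the tree structure: contracting the $k$ membership edges $R$ merges each $v_i$ into its component and turns the $U^*$-edges into a tree on the $k+1$ components, so the edges $\alpha_F(v_i)$ join the internally tree-like components $S_1,\dots,S_{k+1}$ without closing a cycle, yielding a spanning tree of $G$; in particular the $k$ edges are pairwise distinct, since a repeated edge $\Set{v_i,v_{i'}}$ would close the $4$-cycle $v_i\,S_{j(i)}\,v_{i'}\,S_{j(i')}$ in the $H$-tree. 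Thus $F=(E^*,V^*)$ is an $\alpha$-forest conforming with $f$ of size $|E^*|+|V^*|=|V|-1$. Since for any $\alpha$-forest $F'$ the set $E_{F'}\cup\alpha_{F'}(V_{F'})$ is acyclic of cardinality $|F'|$, every $\alpha$-forest has size at most $|V|-1$, so $F$ is maximal, i.e.\ the desired $\alpha$-tree conforming with $f$.
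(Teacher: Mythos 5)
Your proposal is correct and takes essentially the same route as the paper's own proof: pick a rank-$(|V|-1)$ set of active edge/vertex constraints, invoke \cref{lem:at-matrix} to obtain the components $S_1,\dots,S_{k+1}$ and the matrix $C$, verify the expansion condition of \cref{lem:at-graph} via the $C\cdot\mathbbm{1}=0$ rank argument, and convert the resulting tree $(V^*\cupdot S, R\cupdot U^*)$ into an injective vertex-orientation map whose image together with $E^*$ is acyclic. The only cosmetic difference is your greedy edge-rows-first construction of the basis (the paper takes an arbitrary independent set of $|V|-1$ active rows), plus your somewhat more explicit write-up of the injectivity/acyclicity and maximality steps.
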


\begin{proof}
	Let $f \in \QDCf(G)$ be extremal, then there exist at least $|V|-1$ linearly independent active inequalities. From \cref{prop:ABprop}, we obtain that, equivalently, there exists a vector $\phi \in \reals^V$ and $|V|-1$ linearly independent inequalities in \cref{eq:q_phi} which are active in $\phi$. Let $E^*$ and $V^* := \Set{v_1,v_2,\dotsc,v_k}$ be the set of edges and vertices, respectively, corresponding to these constraints and
  $	M := \scolvec{	(B^\top)_{E^*}\\ (AB^\top)_{V^*}} \in \R^{(|V|-1) \times |V|}$. Then
	$M$ has $\rank(M)=|V|-1$, thus satisfying the conditions of \cref{lem:at-matrix}.
	Hence, the graph $(V,E^*)$ consists of $k+1$ (weakly) connected components with vertex sets $S_1, S_2, \dotsc, S_{k+1} \subset V$ and there exists a $k \times (k+1)$-dimensional matrix $C = (\gamma_{ij})_{\substack{i \in \ints{k}\\j \in \ints{p}}}$ with $\rank(C)=k$ such that $C \cdot \mathbbm{1} = 0$ and
	\begin{equation*}
		\gamma_{ij} \begin{cases}
					> 0 & \text{if $v_i \in S_j$}\\
					< 0 & \text{if $v_i \notin S_j$ and $N_{(V,E^*)}(v_i) \cap S_j \neq \emptyset$}\\
					0	& \text{else.}
					\end{cases}
	\end{equation*}

	Now, let $S := \Set{S_1,S_2,\dotsc,S_{k+1}}$, $R := \Set{\Set{v_i,s_j} \given \gamma_{ij} > 0}$, and $U := \Set{\Set{v_i,s_j} \given \gamma_{ij} < 0}$. For the graph $H := (V^* \cupdot S, R \cupdot U)$, the following holds:
	\begin{itemize}
		\item $H$ is bipartite with $|S| = |V^*| + 1$ and
		\item $\delta(v) \cap R = 1$ for all $v \in V^*$, since every $v$ belongs to exactly one connected component $S_j$, $j \in [k+1]$.
	\end{itemize}
	Furthermore, we claim that for every $V' \subseteq V^*$, it holds that $|N_H(V')| \geq |V'| + 1$. To see this, suppose otherwise. Let $\bar V \subset V^*$ denote a set such that $|N_H(\bar V)| \leq |\bar V|$. Reorder the rows and columns of $C$ in such a way that the first rows are those corresponding to vertices in $\bar V$ and the first columns are those corresponding to vertices in $N_H(\bar V)$. Let $V^{**}$ be a subset of $\bar V$ of size $|N_H(\bar V)|$ and let $C'$ be the $|V^{**}| \times |V^{**}|$-dimensional submatrix composed of the rows corresponding to vertices in $V^{**}$ and the columns corresponding to vertices in $N_H(\bar V)$. Then, $C$ can be written as
$C =  \begin{pmatrix}	C'& 0 \\	* & *	\end{pmatrix}$.
Since $C \cdot \mathbbm{1} = 0$, it follows that $C' \cdot \mathbbm{1} = 0$ and hence $\rank(C') < |V^{**}|$ which implies that $\rank(C)<k$, a contradiction. This proves that, indeed, $|N_H(V')| \geq |V'| + 1$ for every $V' \subseteq V$.

	The graph $H$ hence satisfies all the requirements of \cref{lem:at-graph} and we obtain a selection of edges $U^* \subseteq U$ with $\delta(v) \cap U^* = 1$ for every $v \in V^*$ such that the graph $H^* := (V^* \cupdot S, R \cupdot U^*)$ is connected. Since $|S| = |V|+1$ and $|R| = |U^*| = |V|$, this implies that $H^*$ is acyclic. For each $v_i \in V^*$, choose $j_i$ such that $\Set{v_i,s_{j_i}} \in U^*$ and choose $e_i \in E$ incident with $v_i$ such that $S_{j_i} \cap e_i \neq \emptyset$. Such an edge exists, since $\Set{v_i,s_{j_i}} \in U$ implies that $\gamma_{ij_i} < 0$ which in turn implies that $N_{(V,E^*)}(v_i) \cap S_{j_i} \neq \emptyset$.

	Now, for all $v_i \in V^*$, let $\alpha(v_i) := e_i$. Since otherwise $H^*$ would contain an undirected cycle, we see that $\alpha$ is injective and $E^* \cup \alpha(V^*)$ acyclic. Hence, $F := (E^*,V^*)$ is an $\alpha$-forest, which can be seen using the (injective) vertex-orientation map $\alpha_F := \alpha$. Furthermore, $F$ is of size $|E^*| + |V^*| = |V|-1$, which means that it is in fact an $\alpha$-tree.
\end{proof}

Note that we see that the choice of $\alpha_F$ for an $\alpha$-tree $F$ is not necessarily unique.

\section{Non-degenerate Networks and Cactus Graphs}
\label{sec:classes_of_nondeg_graphs}

We have seen above that for every extremal solution $f \in \QDCf(G)$, one can find an $\alpha$-tree in $G$ that conforms with $f$. The reverse, however, is not necessarily the case, as the following example shows:

	\begin{figure}
	\centering
		\begin{tikzpicture}
		\node[gridnode, very thick] (1) at (0,0) {$w$};
		\node[gridnode, very thick] (2) at (0,4) {$v$};
		\node[gridnode] (3) at (-2,2) {$s$};
		\node[gridnode] (4) at (2,2) {$t$};

		\draw[very thick, ->] (1) edge (2);
		\draw[->] (1) edge (3);
		\draw[->] (2) edge (3);
		\draw[->] (1) edge (4);
		\draw[->] (2) edge (4);
		\end{tikzpicture}
	\caption{If $\frac{b_{wt}}{b_{ws}}=\frac{b_{vt}}{b_{vs}}$, then the $\alpha$-tree indicated by the active vertices and edges marked in bold conforms to a solution which is not extremal and hence the graph is $\alpha$-tree degenerate (see \cref{def:DC_nondeg_extremal_rep}).}
	\label{fig:DC-degenerate}
	\end{figure}
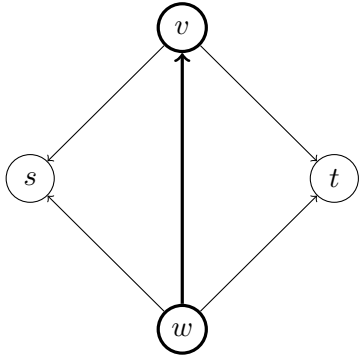

\begin{example}\label{ex:DC_nondeg_fails}
	Consider the network $G=(V,E,b,\pmax,\fmax)$ with $(V,E)$ as given in \cref{fig:DC-degenerate}. Let $f \in \QDCf(G)$ be such that the thick edges and vertices are those for which a corresponding constraint is binding. Note that there exists an $\alpha$-tree $F$ which conforms to $f$ (for example, $F=(\Set{ (v,w)}, \Set{v,w})$ with $\alpha_F(v)=(v,s)$ and $\alpha_F(w)=(w,t)$). However, $f$ need not be extremal in $\QDCf(G)$. More precisely, in the above case $f$ is extremal if and only if $\frac{b_{wt}}{b_{ws}}\neq\frac{b_{vt}}{b_{vs}}$.
\end{example}

In this section, we will derive a characterization of graphs for which we can guarantee that every solution for which a conforming $\alpha$-tree exists, is extremal.

\begin{definition}\label{def:DC_nondeg_extremal_rep}
	We say that a weighted graph $(V,E,b)$ is \emph{$\alpha$-tree non-degenerate} if
	  the following are equivalent for all networks $G=(V,E,b,\pmax,\fmax)$ (independently of the values $\pmax$ and $\fmax$):
	  \begin{enumerate}[label=\roman*)]
	  \item a point $f \in \QDCf(G)$ is extremal (i.e. the set of active edge and vertex constraints has rank $|V|-1$),
	  \item there exists an $\alpha$-tree in $G$ that conforms with $f$.
	  \end{enumerate}
	Otherwise, $(V,E,b)$ is \emph{$\alpha$-tree degenerate}.

	We say that a network $G=(V,E,b,\pmax,\fmax)$, is \emph{$\alpha$-tree (non-)degenerate} if the underlying weighted graph $(V,E,b)$ is \emph{$\alpha$-tree (non-)degenerate}.
\end{definition}

In order to avoid unnecessarily complicated language, we will simply call a network \emph{degenerate} (or \emph{non-degenerate}) if this is unambiguous. In a non-degenerate network, we can thus identify every extremal solution with a (not necessarily unique) selection of edges and vertices that form an $\alpha$-tree. Conversely, every solution for which such a selection of edges and vertices exists must be extremal.

Note that non-degeneracy of a network implies a more general relation between (not necessarily maximal) $\alpha$-forests and points in higher-dimensional faces:

\begin{remark}\label{lem:rank_min_tree_size}
Let $G=(V,E,b,\fmax,\pmax)$ be a non-degenerate network.
Then, the following are equivalent:
	\begin{enumerate}
		\item the matrix of active constraints in any point $f \in \QDCf(G)$ has rank $k$,
		\item there exists an $\alpha$-forest $F$ of size $k$ in $G$ that conforms with $f$.
	\end{enumerate}
\end{remark}

In the following we will present a characterization of all graphs on which any network is non-degenerate, irrespective of the given \tension vector $b$.

	The idea for the assumptions used in the upcoming main \cref{thm:chord-free_nondeg} are inspired by \cite{Zhang:2013}. However the statements of the following theorems of this subsection, as well as the proof techniques used, are entirely different. The idea of decomposing a network into subgraphs connected by a single vertex, that we will follow to prove \cref{thm:chord-free_nondeg}, was independently used in \cite{Leibfried:2015} to answer a question about the effect of partial relaxation of the DC equations in power flow models (which corresponds to relaxing the differential constraints in $\QDCf(G)$).

	\begin{lemma}\label{thm:1-con-nondeg}
		Let $H_1 := (V_1 \cup \Set{v^*},E_1)$ and $H_2 := (V_2 \cup \set{v^*},E_2)$ be two directed, weakly connected graphs with $V_1 \cap V_2 = \emptyset$ such that every network on these graphs is non-degenerate. Let $(V,E)$ be the graph
    with $V=V_1 \cup V_2 \cup \Set{v^*}$ and $E = E_1 \cup E_2$. Then, every network on $(V,E)$ is non-degenerate.
	\end{lemma}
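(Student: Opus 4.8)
The plan is to first reduce non-degeneracy to a purely linear-algebraic rank condition and then exploit the cut-vertex structure at $v^*$. Since \cref{thm:alpha-tree-ex} already gives the implication (i)$\Rightarrow$(ii) of \cref{def:DC_nondeg_extremal_rep} for every network, I would show that a weighted graph $(V,E,b)$ is non-degenerate if and only if for every $\alpha$-tree $F=(E_F,V_F)$ the matrix $M_F := \scolvec{(B^\top)_{E_F}\\(AB^\top)_{V_F}}$ has rank $|V|-1$. The nontrivial direction is that a rank deficit forces degeneracy: given a deficient $F$, one chooses $\phi$ arbitrary, sets $f:=B^\top\phi$, and picks $\pmax,\fmax$ so that exactly the bounds indexed by $E_F$ and $V_F$ are tight (all others strict), producing a conforming but non-extremal point. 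Equivalently, $F$ certifies extremality precisely when the only $\phi\in\reals^V$ with $B^\top_e\phi=0$ for all $e\in E_F$ and $(AB^\top)_v\phi=0$ for all $v\in V_F$ lies in $\operatorname{span}(\mathbbm{1})$. Here the edge equations force $\phi$ to be constant on each connected component of $(V,E_F)$, and the vertex equations are the nodal-admittance rows.

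Next I would fix such an $\alpha$-tree $F$ in the glued graph $G$ together with a vertex-orientation map $\alpha_F$, and split it along the cut vertex $v^*$: set $E_F^i := E_F\cap E_i$, $V_F^i := V_F\cap V_i$ for $i\in\Set{1,2}$, and $\epsilon:=1$ if $v^*\in V_F$ and $\epsilon:=0$ otherwise. Because $v^*$ separates $H_1$ from $H_2$, every component of $(V,E_F)$ other than the one containing $v^*$ lies entirely in $V_1$ or in $V_2$, and $T:=E_F\cup\alpha_F(V_F)$ is a spanning tree of $G$ whose restrictions $T_i:=T\cap E_i$ are spanning trees of $H_i$ with $|T_i|=|V_i|$. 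Since $\alpha_F(V_F^i)\subseteq E_i$, the restriction of $\alpha_F$ shows that $(E_F^i,V_F^i)$ is an $\alpha$-forest of $H_i$, and reading off edge counts from $T_i$ gives the size identity $(|E_F^1|+|V_F^1|)+(|E_F^2|+|V_F^2|)=|V_1|+|V_2|-\epsilon$, where each summand is at most $|V_i|$, the size of an $\alpha$-tree of $H_i$.

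Now I would take any $\phi$ solving the homogeneous system of $F$ and argue it is constant. If $\epsilon=0$, the size identity forces both $(E_F^i,V_F^i)$ to be $\alpha$-trees of $H_i$; restricting the equations to $V_i\cup\Set{v^*}$ (the admittance rows at $v\in V_F^i$ involve only $H_i$) and invoking non-degeneracy of $H_i$ forces $\phi$ constant on $V_i\cup\Set{v^*}$, and the two constants agree through the shared vertex $v^*$. If $\epsilon=1$, exactly one side—say $H_2$—carries an $\alpha$-tree while $(E_F^1,V_F^1)$ has size $|V_1|-1$; the $T_1$-count then additionally forces $\alpha_F(v^*)\in E_1$. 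Non-degeneracy of $H_2$ first makes $\phi$ constant on $V_2\cup\Set{v^*}$, so in the splitting $(AB^\top)_{v^*}\phi=(A_1B_1^\top)_{v^*}\phi+(A_2B_2^\top)_{v^*}\phi$ the second summand is a constant times the vanishing admittance row sum (by \cref{prop:ABprop}) and hence is $0$. The $v^*$-equation thus reduces to $(A_1B_1^\top)_{v^*}\phi=0$, so $\phi$ restricted to $H_1$ solves the homogeneous system of $(E_F^1,V_F^1\cup\Set{v^*})$, which by the $T_1$-count and $\alpha_F(v^*)\in E_1$ is an $\alpha$-tree of $H_1$; non-degeneracy of $H_1$ then forces $\phi$ constant on $V_1\cup\Set{v^*}$, and gluing at $v^*$ yields a globally constant $\phi$.

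In every case $\ker M_F=\operatorname{span}(\mathbbm{1})$, so $F$ certifies extremality, and by the reformulation $G$ is non-degenerate. I expect the main obstacle to be the combinatorial bookkeeping of the decomposition rather than any deep idea: one must verify carefully that the restricted pairs are genuine $\alpha$-forests and $\alpha$-trees of $H_i$ (in particular that the size counting pins down exactly the right deficiency, and that, when $\epsilon=1$, the vertex-orientation image $\alpha_F(v^*)$ necessarily lands on the deficient side), and that the nodal-admittance row at the cut vertex decomposes exactly into the corresponding $H_1$- and $H_2$-rows. Once these facts are established, the two applications of non-degeneracy of $H_1$ and $H_2$ combine immediately through the shared value $\phi_{v^*}$.
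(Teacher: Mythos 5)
Your proposal is correct, but it takes a genuinely different route from the paper. You first reduce non-degeneracy to a purely homogeneous linear-algebra statement -- every $\alpha$-tree $F$ must satisfy $\ker\scolvec{(B^\top)_{E_F}\\(AB^\top)_{V_F}}=\operatorname{span}(\mathbbm{1})$ -- where the nontrivial direction (a rank-deficient $\alpha$-tree forces degeneracy) is handled by your explicit construction of bounds tight exactly on $E_F\cup V_F$; this reformulation is never stated as a lemma in the paper, though it is implicit in its rank-based definition of extremality, and your sketch of it is sound. You then argue by counting ($|E_F^i|+|V_F^i|\le|V_i|$, total $|V_1|+|V_2|-\epsilon$) that the split pieces are $\alpha$-forests of the right sizes, that $\alpha_F(v^*)$ must land on the deficient side, and that the admittance row at $v^*$ decomposes so that constancy on one side reduces it to the other side's row; all of these verifications check out. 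The paper instead stays entirely inside the polyhedral language of \cref{def:DC_nondeg_extremal_rep}: it takes a convex decomposition $f=\lambda\hat f+(1-\lambda)\tilde f$, builds an auxiliary network on $H_1$ with the bounds at $v^*$ relaxed to $(-\infty,\infty)$ (so the restricted pair is an $\alpha$-tree of $H_1$ not containing $v^*$), concludes $\hat f^1=\tilde f^1=f^1$, and then -- when $v^*$ is active, with the \Wlog\ choice $\alpha_F(v^*)\in E_2$ playing the role of your ``deficient side'' -- uses a flow-balance inequality at $v^*$ to show that $\hat f,\tilde f$ have the same $E_2$-balance as $f$, so that pinning $\pl_{v^*}=\pu_{v^*}$ in a second auxiliary network on $H_2$ makes all three flows feasible there and lets non-degeneracy of $H_2$ finish the argument. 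The skeletons match (one side absorbs $v^*$ as an active vertex, the other carries an $\alpha$-tree without it), but the machinery differs: your version avoids constructing modified networks and reasoning about feasibility of the convex-combination pieces, and makes the combinatorial role of $\alpha_F(v^*)$ explicit, at the cost of first establishing the rank characterization; the paper's version needs no reformulation of non-degeneracy but pays for it with the bound-modification and flow-balance steps.
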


	\begin{proof}
		Let $G=(V,E,b,\pmax,\fmax)$ be a network on $(V,E)$. Due to \cref{thm:alpha-tree-ex}, it suffices to show that if there exists an $\alpha$-tree that conforms with $f \in \QDCf(G)$, then $f$ is extremal.

		Thus, let $f \in \QDCf(G)$ and $F$ be an $\alpha$-tree that conforms with $f$ with vertex-orientation map $\alpha_F: V_F \rightarrow E \setminus E_F$.
		If $v^*$ is active in $F$, then we assume \Wlog~that $\alpha_F(v^*) \in E_2$.

		Let $\hat f,\tilde f \in \QDCf(G)$
    and $\lambda \in (0,1)$ such that $f = \lambda \hat f + (1-\lambda) \tilde f$. We will prove that this implies
    $\tilde f = \hat f = f$ and therefore $f$ must be extremal.

		We write $f^i$ for the restrictions of $f$ to
    $E_i$ and analogously $\hat f^i$ for $\hat f$ and $\tilde f^i$ for $\tilde f$, $i=1,2$. Consider the network $G_1$ obtained from restricting $G$ to the graph $H_1$ and changing the bounds for the vertex $v^*$ to $(-\infty, \infty)$ such that the constraint is never binding. This will make sure that the vertex $v^*$ can never be active in any $\alpha$-tree in the graph $H_1$.
		Moreover, let $F_1 := (E_{F_1}, V_{F_1})$ with $E_{F_1} := E_F \cap E_1$ and $V_{F_1} := E_F \cap E_1$. Then $F_1$ is an $\alpha$-forest with vertex-orientation map $\alpha_{F_1} := {\alpha_F|}_{V_1}$ which conforms with $f^1$. Since $F$ was an $\alpha$-tree in $G$ and either $v^* \notin V_F$ or, by our assumption, $\alpha_F(v^*) \in E_2$, it holds that $E_{F_1} \cup \alpha_{F_1}(V_{F_1}) = (E_F \cup \alpha_F(V_F)) \cap E_1$, which means that $F_1$ is actually an $\alpha$-tree in $G_1$.

		Furthermore, since $G_1$ is a network on $H_1$ it is non-degenerate and hence $f^1$ is an extremal flow in $G_1$.

    Finally, since $\hat f^1$ and $\tilde f^1$ are also feasible for $G_1$ and $f = \lambda \hat f + (1-\lambda) \tilde f$, we have in particular that $\hat f^1=\tilde f^1=f^1$.

		If $v^*$ is not active in $F$, then we can apply the same argument to conclude that $\hat f^2=\tilde f^2=f^2$ and thus $\hat f=\tilde f=f$.

    Hence we may assume that $v^*$ is indeed active in $F$, which, by our assumption, implies that $\alpha_F(v^*) \in E_2$.

		Since $F$ conforms with $f$, the flow $f$ satisfies either the upper or the lower vertex constraint in $v^*$ with equality in the network $G$. Suppose that the upper bound $\pu_{v^*}$ is binding (the argument would be the same if the lower bound $\pl_{v^*}$ is binding instead).
		Then
		\begin{align}
			\sum_{e \in \delta^\outn(v^*) \cap E_2} \hat f_{e} - \sum_{e \in \delta^\inn(v^*) \cap E_2} \hat f_{e}
			&\leq \pu_{v^*} - \sum_{e \in \delta^\outn(v^*) \cap E_1} \hat f_{e} - \sum_{e \in \delta^\inn(v^*) \cap E_1} \hat f_{e}\label{eq:1-con-nondeg-1}\\
			&= \pu_{v^*} - \sum_{e \in \delta^\outn(v^*) \cap E_1} f_{e} - \sum_{e \in \delta^\inn(v^*) \cap E_1} f_{e}\label{eq:1-con-nondeg-2}\\
			&= \sum_{e \in \delta^\outn(v^*) \cap E_2} f_{e} - \sum_{e \in \delta^\inn(v^*) \cap E_2} f_{e},\label{eq:1-con-nondeg-3}
		\end{align}
		where \cref{eq:1-con-nondeg-1} follows from the feasibility of $\hat f$ in $G$, \cref{eq:1-con-nondeg-2} follows from $f^1=\hat f^1$ and \cref{eq:1-con-nondeg-3} follows from the fact that $f$ satisfies the vertex constraint in $v^*$ with equality. By the same argument, $\sum_{e \in \delta^\outn(v^*) \cap E_2} \tilde f_{e} - \sum_{e \in \delta^\inn(v^*) \cap E_2} \tilde f_{e}
			\leq \sum_{e \in \delta^\outn(v^*) \cap E_2} f_{e} - \sum_{e \in \delta^\inn(v^*) \cap E_2} f_{e}$ which, since $f = \lambda \hat f + (1-\lambda) \tilde f$, implies
			\begin{align*}
				\sum_{e \in \delta^\outn(v^*) \cap E_2} f_{e} - \sum_{e \in \delta^\inn(v^*) \cap E_2} f_{e}
				&= \sum_{e \in \delta^\outn(v^*) \cap E_2} \hat f_{e} - \sum_{e \in \delta^\inn(v^*) \cap E_2} \hat f_{e}\\
				&= \sum_{e \in \delta^\outn(v^*) \cap E_2} \tilde f_{e} - \sum_{e \in \delta^\inn(v^*) \cap E_2} \tilde f_{e}.
			\end{align*}
		We now consider the network $G_2$ defined as follows: We restrict $G$ to the graph $H_2$ and fix both upper and lower bound of $v^*$ by
		\begin{equation*}
			\pl_{v^*} = \pu_{v^*} = \sum_{e \in \delta^\outn(v^*) \cap E_2} f_{e} - \sum_{e \in \delta^\inn(v^*) \cap E_2} f_{e}.
		\end{equation*}
		By our observations above, all of $f,\hat f,\tilde f$ are indeed feasible for $G_2$.
		Furthermore, the $\alpha$-forest $(E_F \cap E_2, \Set{v \in V_F \given \alpha_F(v) \in E_2}$ is an $\alpha$-tree in $G_2$ and, since $G_2$ is a non-degenerate network on $H_2$, we have that $\hat f^2=\tilde f^2=f^2$. In summary, we obtain $\hat f=\tilde f=f$.
	\end{proof}

	To be able to apply the above lemma, we now prove that networks on certain basic families of graphs are always non-degenerate. We start by considering networks of the extremely basic graph consisting of a single edge.

	\begin{lemma}\label{lem:K2-nondeg-rep}
		If $G=(V,E,b,\pmax,\fmax)$ is a network with $(V,E) = (\Set{ v,w },\Set{ (v,w) })$, then $G$ is non-degenerate.
	\end{lemma}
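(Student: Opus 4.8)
The plan is to invoke \cref{thm:alpha-tree-ex}, which already yields the implication ``$f$ extremal $\Rightarrow$ a conforming $\alpha$-tree exists'' for every network. Hence, to establish non-degeneracy in the sense of \cref{def:DC_nondeg_extremal_rep}, the only thing left is the converse: whenever some $\alpha$-tree conforms with a feasible $f \in \QDCf(G)$, the point $f$ must be extremal, i.e.~the active rows of $\scolvec{B^\top\\AB^\top}$ must have rank $|V|-1 = 1$. Since the claim must hold for arbitrary bounds $\pmax,\fmax$, I would keep the whole argument independent of those values.

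First I would write down the relevant constraint rows explicitly. With $e := (v,w)$ the incidence matrix is $A = \scolvec{-1\\1}$, so $B^\top = \begin{pmatrix} -b_e & b_e\end{pmatrix}$ and $AB^\top = \scolvec{b_e & -b_e\\ -b_e & b_e}$ (\cf~\cref{eq:susceptance_matrix,eq:q_phi}). The crucial observation is that the single edge row and the two vertex rows are all nonzero scalar multiples of $\begin{pmatrix} -b_e & b_e\end{pmatrix}$; consequently any \emph{nonempty} selection of active constraints already has rank exactly $1 = |V|-1$, consistent with $\rank(B)=1$ from \cref{prop:ABprop}.

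Next I would check that a conforming $\alpha$-tree always forces at least one active constraint. Because the graph is connected, a maximal $\alpha$-forest has size $|V|-1 = 1$, so an $\alpha$-tree consists of exactly one active element: either $E_F = \Set{e}$ with $V_F = \emptyset$, or $E_F = \emptyset$ with $V_F$ a single vertex. The remaining combinations fail, since two active vertices cannot be injectively mapped into the single-edge set $\Set{e}$, and an active edge leaves no edge available for the vertex-orientation map. In every admissible case, conformance with $f$ forces the corresponding edge or vertex constraint to be binding, so at least one row is active.

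Combining the two observations closes the argument: the set of active rows is nonempty and has rank $1 = |V|-1$, hence $f$ is extremal by definition. I do not expect a genuine obstacle here, as this lemma is really the base case feeding the $1$-connected decomposition of \cref{thm:1-con-nondeg}; the only point demanding care is to phrase the rank computation so that it holds uniformly over all choices of $\pmax$ and $\fmax$.
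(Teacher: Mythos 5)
Your proposal is correct. Its skeleton matches the paper's: non-degeneracy is reduced via \cref{thm:alpha-tree-ex} to the single implication \enquote{conforming $\alpha$-tree $\Rightarrow$ extremal}, which is then settled by the one-dimensionality of the problem ($|V|-1=1$). The execution differs in flavor, though. The paper argues contrapositively by perturbation: if $f$ is not extremal, then $\QDCf(G)\subseteq\reals^E$ (an interval, since here $|E|=1$) contains points $f',f''$ with $f'_{vw}<f_{vw}<f''_{vw}$; since every edge and vertex constraint is a bound on $\pm f_{vw}$, none can be tight at $f$, so the only conforming $\alpha$-forest is the empty one, which is not an $\alpha$-tree. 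You instead verify the rank criterion of \cref{def:DC_nondeg_extremal_rep} directly: all three rows of $\scolvec{B^\top\\AB^\top}$ are nonzero multiples of $(-b_e,\, b_e)$, a conforming $\alpha$-tree must be one of $(\Set{e},\emptyset)$, $(\emptyset,\Set{v})$, $(\emptyset,\Set{w})$ and hence forces at least one active row, and a single active row already has rank $1=|V|-1$. The two arguments are essentially contrapositives of one another and both are sound; yours buys a verbatim match with the rank-based definition of extremality and an explicit classification of the $\alpha$-trees on this graph, while the paper's perturbation argument is shorter and avoids any matrix computation. Your closing remark is also accurate: this lemma is exactly the base case consumed by \cref{thm:1-con-nondeg} and \cref{cor:tree-nondeg-rep}, and your argument, like the paper's, is uniform in $\pmax$ and $\fmax$ as required.
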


	\begin{proof}
		Let $f \in \QDCf(G)$ and let $F$ be an $f$-maximal $\alpha$-forest in $G$.
		If $f$ is not extremal then there exist $f',f'' \in \QDCf(G)$ with $f'_{vw} < f_{vw} < f''_{vw}$. This means that no vertex or edge constraint can be tight for $f$ and hence $F$ must be empty and cannot be an $\alpha$-tree.
	\end{proof}

	The next corollary follows immediately from repeated application of \cref{thm:1-con-nondeg} and \cref{lem:K2-nondeg-rep}, keeping in mind that every tree has a leaf.

	\begin{corollary}\label{cor:tree-nondeg-rep}
		If $G=(V,E,b,\pmax,\fmax)$ is a network such that the graph $(V,E)$ is a tree, then $G$ is non-degenerate.
	\end{corollary}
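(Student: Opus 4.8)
The plan is to prove the statement by induction on the number of edges of the tree $(V,E)$, using \cref{lem:K2-nondeg-rep} as the base case and the gluing \cref{thm:1-con-nondeg} for the inductive step. The guiding observation is that a tree can always be peeled apart at a leaf into a single edge and a strictly smaller tree which share exactly one vertex --- precisely the decomposition that \cref{thm:1-con-nondeg} is designed to handle.

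For the base case, I would note that a tree with a single edge is exactly the graph $(\Set{v,w},\Set{(v,w)})$ treated in \cref{lem:K2-nondeg-rep}, so every network on it is non-degenerate. (A tree consisting of a single vertex carries no edges, and the claim is then trivial, since $\QDCf(G)$ reduces to a single point.)

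For the inductive step, I would assume the claim for all trees with fewer than $|E|$ edges and let $(V,E)$ be a tree with $|E|\geq 2$. Since every finite tree has a leaf, I would pick a leaf $\ell\in V$ together with its unique incident edge, whose other endpoint I call $v^*$. I then set $H_1:=(\Set{v^*,\ell},E_1)$ with $E_1$ the single edge between $v^*$ and $\ell$, and $H_2:=(V\setminus\Set{\ell},E_2)$ with $E_2:=E\setminus E_1$ the tree obtained by deleting the leaf. With $V_1:=\Set{\ell}$ and $V_2:=V\setminus\Set{v^*,\ell}$ one has $V_1\cap V_2=\emptyset$, both $H_1$ and $H_2$ are weakly connected (a single edge, respectively a tree with a leaf removed), and $V=V_1\cup V_2\cup\Set{v^*}$ while $E=E_1\cup E_2$. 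By the base case every network on $H_1$ is non-degenerate, and by the inductive hypothesis every network on $H_2$ is non-degenerate, as $H_2$ is a tree with $|E|-1$ edges. Thus all hypotheses of \cref{thm:1-con-nondeg} are met, and it yields that every network on $(V,E)$ is non-degenerate.

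The argument is essentially routine, and I do not anticipate any genuine obstacle; the only points that actually require verification are that deleting a leaf from a tree leaves a weakly connected graph and that $H_1$ and $H_2$ meet in exactly the single vertex $v^*$, so that the decomposition literally matches the setup of \cref{thm:1-con-nondeg}. Both facts are immediate from elementary properties of trees.
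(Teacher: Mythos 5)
Your proof is correct and takes essentially the same route as the paper, which obtains the corollary precisely by repeated application of \cref{thm:1-con-nondeg} and \cref{lem:K2-nondeg-rep}, ``keeping in mind that every tree has a leaf.'' Your write-up simply makes explicit the induction and the leaf-peeling decomposition that the paper leaves implicit.
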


	Next, we consider networks for which the underlying graph is a cycle. This is a special case of a more general statement that will be proved later (\cref{thm:suff_cond_small_deg}). We provide a short direct proof here to keep this section self-contained.

	\begin{lemma}\label{lem:cycle-nondeg-rep}
		If $G=(V,E,b,\pmax,\fmax)$ is a network such that the graph $(V,E)$ is an (undirected) cycle, then $G$ is non-degenerate.
	\end{lemma}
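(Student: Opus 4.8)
The plan is to invoke \cref{thm:alpha-tree-ex}, which already gives one implication, so that it suffices to prove the converse: whenever an $\alpha$-tree $F=(E_F,V_F)$ conforms with some $f\in\QDCf(G)$, the point $f$ is extremal. Write $n:=|V|$ (so $|E|=n$ since the graph is a cycle). I would argue by contradiction: if $f$ is not extremal, write $f=\lambda\hat f+(1-\lambda)\tilde f$ with $\hat f\neq\tilde f\in\QDCf(G)$ and $\lambda\in(0,1)$, and set $g:=\hat f-\tilde f\neq 0$. Since $\hat f,\tilde f$ are differential flows, so is $g$ (it is induced by the difference of the two potentials). Because $f_e$ sits at one of its bounds $\fl_e,\fu_e$ for $e\in E_F$ and $(-Af)_v$ sits at one of $\pl_v,\pu_v$ for $v\in V_F$, convexity forces the corresponding values of $\hat f$ and $\tilde f$ to agree with those of $f$; hence $g_e=0$ for all $e\in E_F$ and $(Ag)_v=0$ for all $v\in V_F$. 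The goal becomes to show that such a $g$ must vanish, contradicting $g\neq 0$.

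The main idea is to pass to traversal coordinates along the cycle. Fix a cyclic traversal $v_1,e_1,v_2,e_2,\dots,v_n,e_n$, let $\sigma_i=\pm 1$ record whether $e_i$ agrees with the traversal, and put $h_i:=\sigma_i g_{e_i}$, the flow in the forward direction. Three facts then hold simultaneously and cleanly: the edge conditions give $h_i=0$ for $e_i\in E_F$; the vertex balance $(Ag)_{v_i}=0$ becomes $h_{i-1}=h_i$, i.e.\ the flow passes through every active vertex; and the fact that $g$ is a differential flow yields the single relation $\sum_i h_i/b_i=0$ by telescoping the potential around the cycle. Writing $k:=|V_F|$, the equalities $h_{i-1}=h_i$ (one for each $v_i\in V_F$) partition the $n$ cyclic edges into $n-k$ contiguous arcs of constant flow.

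The key step, where both the cycle structure and the $\alpha$-tree structure enter, is to count $E_F$-edges per arc. On each maximal arc $\{e_a,\dots,e_b\}$ the interior vertices $v_{a+1},\dots,v_b$ all lie in $V_F$, and each such $v_i$ has its designated edge $\alpha_F(v_i)\in\{e_{i-1},e_i\}$ inside the arc and outside $E_F$. Injectivity of $\alpha_F$ then covers all but one edge of the arc by the image $\alpha_F(V_F)$, so each arc contains \emph{at most one} $E_F$-edge. Since an $\alpha$-tree has size $n-1$, we have $|E_F|=(n-1)-k=(n-k)-1$, one less than the number of arcs; hence exactly one arc is $E_F$-free, while every other arc contains an $E_F$-edge and therefore carries flow $0$. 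Feeding this into $\sum_i h_i/b_i=0$ leaves only the free arc's common value $\eta$ times a strictly positive sum $\sum 1/b_i$, forcing $\eta=0$; thus $g\equiv 0$, the desired contradiction.

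The part I expect to require the most care is the sign bookkeeping: one must verify that in the chosen traversal the vertex-balance equations genuinely collapse to $h_{i-1}=h_i$ (rather than $h_{i-1}=\pm h_i$ with inconvenient signs) and that the differential constraint contributes exactly $\sum_i h_i/b_i=0$ with all coefficients positive, since it is precisely this coherence of signs that makes the final cycle equation non-degenerate. The wrap-around case $k=n-1$ (where $E_F=\emptyset$ and the arcs degenerate into a single arc spanning all edges) should be checked separately, but it is settled by the same concluding computation.
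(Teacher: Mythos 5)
Your proposal is correct, but it follows a genuinely different route from the paper's proof. The paper handles both implications by reducing to the tree case: since $E_F \cup \alpha_F(V_F)$ has only $|V|-1$ edges, there is an edge $e^*=(s,t)$ that is neither active nor an $\alpha_F$-image; deleting $e^*$ and absorbing the value $f_{e^*}$ into shifted vertex bounds at $s$ and $t$ produces a network on a path, whose non-degeneracy is already known (\cref{cor:tree-nondeg-rep}, built from \cref{lem:K2-nondeg-rep} via the gluing result \cref{thm:1-con-nondeg}), and extremality and conformity transfer between $G$ and this path network. You instead invoke \cref{thm:alpha-tree-ex} for the direction ``extremal implies conforming $\alpha$-tree'' (legitimate---the paper does exactly this inside the proof of \cref{thm:1-con-nondeg}) and prove the converse by a direct computation on the cycle: a perturbation $g \neq 0$ vanishing on $E_F$ and conserved at $V_F$, traversal coordinates $h_i$, the arc decomposition induced by the active vertices, the pigeonhole count (injectivity of $\alpha_F$ forces at most one $E_F$-edge per arc, while $|E_F|$ equals the number of arcs minus one, so exactly one arc is free), and the telescoping relation $\sum_i h_i/b_{e_i} = 0$ with strictly positive coefficients. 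I checked the points you flagged: conservation at an active vertex does collapse to $h_{i-1}=h_i$ in traversal coordinates irrespective of edge orientations, the cycle relation indeed has coefficients $1/b_{e_i}>0$, and your wrap-around case $k=n-1$ needs no separate treatment beyond the same final computation. As to what each approach buys: the paper's proof is shorter and reuses the lemma chain assembled for the cactus theorem, whereas yours is self-contained on the cycle (needing neither \cref{cor:tree-nondeg-rep} nor \cref{thm:1-con-nondeg}), makes explicit exactly where positivity of $b$ and injectivity of $\alpha_F$ enter, and anticipates the mechanism of the more general \cref{thm:suff_cond_small_deg}, of which the paper notes this lemma is a special case. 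The only step you should spell out is the identity $|E_F|+|V_F| = n-1$ for an $\alpha$-tree on the cycle: any $\alpha$-forest has size at most $n-1$ because $E_F \cup \alpha_F(V_F)$ is acyclic, and this size is attained (take $E_F$ a spanning path and $V_F=\emptyset$), so maximality forces size exactly $n-1$.
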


\begin{proof}
	Let $f \in \QDCf(G)$ and $F$ be an $\alpha$-tree in $G$ that conforms with $f$. Since $|E| = |V|$, there exists $e^* = (s,t) \in E \setminus (E_F \cup \alpha_F(V_F))$. Now, let $b' = (b_{e})_{e \in E \setminus \set{e^*}}$, $\fmax' = (f_e)_{e \in E \setminus \set{e^*}}$. Furthermore, let $\pmax' \in \reals^V$ be given by $\pmax'_v = \pmax_v$ for all $v \in V \setminus \set{s,t}$, as well as $\pmax'_s = \pmax_s + f_{e^*}$ and $\pmax'_t = \pmax_t - f_{e^*}$ and consider the network $G'=(V,E \setminus \set{e^*},b',\pmax',\fmax')$. Then, $f' := (f_e)_{e \in E \setminus \set{e^*}} \in \QDCf(G')$ and $F$ is an $\alpha$-tree in $G'$ that conforms with $f'$. By \cref{cor:tree-nondeg-rep}, $G'$ is non-degenerate and hence $f'$ is extremal. Since $f$ satisfies the same edge and vertex constraints in $\QDCf(G)$, $f$ is also extremal.

	Conversely, let $f \in \QDCf(G)$ be extremal. Since $\QDCf(G)$ contains $|V|$ edge constraints, there exists $e^* \in E$ such that $\QDCf(G)$ contains a selection of edge and vertex constraints active in $f$, which does not include the edge constraint corresponding to $e^*$. Let the network $G'$ and $f'$ be defined as above. Then, $f'$ is extremal in $G'$ and by \cref{cor:tree-nondeg-rep}, there exists an $\alpha$-tree $F$ in $G'$ that conforms with $f'$. Using the same vertex-orientation map, $F$ is also an $\alpha$-tree in $G$. Furthermore, note that for every $e \in E \setminus \set{e^*}$ and $v \in V$ for which the corresponding edge/vertex constraint of $\QDCf(G')$ is active in $f'$, the corresponding edge/vertex constraint of $\QDCf(G)$ is also active in $f$. Hence, $F$ also conforms with $f$, which proves the statement.
\end{proof}

	The above results can be combined to characterize the family of graphs on which networks are guaranteed to be non-degenerate.

\begin{figure}
\centering
	\begin{tikzpicture}
	\node[gridnode] (1) at (0,0) {$w$};
	\node[gridnode] (2) at (0,4) {$v$};
	\node[gridnode] (3) at (-2,2) {$s$};
	\node[gridnode] (4) at (2,2) {$t$};

	\draw (1) edge (2);
	\draw (1) edge (3);
	\draw (2) edge (3);
	\draw (1) edge (4);
	\draw (2) edge (4);
	\end{tikzpicture}
\caption{The diamond graph (or Wheatstone bridge).}
\label{fig:wheatstone-graph}
\end{figure}
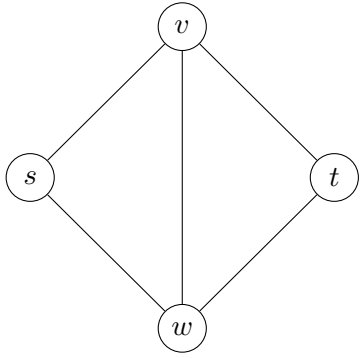

\begin{definition}
	Let $K_4$ be the complete undirected graph on 4 vertices. The graph that results from deleting one edge from $K_4$ is called \emph{diamond graph} (see \cref{fig:wheatstone-graph} and compare \cref{fig:DC-degenerate}).

	We call a graph $G$ a \emph{subdivision} of a graph $H$, if $G$ can be obtained by adding an arbitrary number of extra vertices along the edges of $H$. Furthermore, $H$ is a \emph{topological minor} of $G$ if $G$ contains a subdivision of $H$ as a subgraph (see, \eg, \cite{Diestel:2017}).

	An undirected graph $G$ is called \emph{cactus} if it does not contain the diamond graph as a topological minor \cite{El-Mallah:1988}. We say that an antisymmetric directed graph $(V,E)$ is a cactus if the graph that results from replacing all edges in $E$ by undirected edges is a cactus.
	\end{definition}

	The diamond graph is known in the context of electrical circuits as the \emph{Wheatstone bridge}, a device to measure the electrical parameters of a circuit (\cite{Wheatstone:1843}, originally introduced by \textcite{Christie:1833}). The structure is characterized by the fact that if a voltage is applied between the vertices $s$ and $t$, then the direction of the current between $v$ and $w$ changes in response to the resistance of the individual branches.

 The following characterization can be found in \cite{El-Mallah:1988}:
	\begin{proposition}
	\label{thm:cactus_alt_def}
	The following are equivalent:
	\begin{enumerate}
		\item The graph $(V,E)$ is a cactus.
		\item Every induced subgraph of $(V,E)$ which is $2$-connected\footnote{A graph is $2$-connected if it contains at least 3 vertices and remains connected whenever any single vertex (and all incident edges) is removed (see, \eg, \cite{Diestel:2017}).} is a simple cycle (i.e.~a connected graph where every vertex has degree 2).
		\item No edge $e \in E$ belongs to more than one simple cycle.
	\end{enumerate}
	\end{proposition}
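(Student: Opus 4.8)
The plan is to rephrase the forbidden topological minor in cyclic language and then close the loop $(a)\Rightarrow(b)\Rightarrow(c)\Rightarrow(a)$, proving each implication by contraposition. The key reformulation is that a subdivision of the diamond graph (see \cref{fig:wheatstone-graph}) is exactly a \emph{theta subgraph}: two branch vertices $u,w$ joined by three pairwise internally disjoint paths $P_1,P_2,P_3$. Indeed, the diamond consists of two triangles glued along an edge, i.e.\ two vertices joined by three paths (of lengths $1,2,2$), and subdividing its edges produces precisely the theta subgraphs (in a simple graph at most one of the three paths can be a single edge, which matches the diamond after subdividing the short path). I would record immediately the elementary fact that such a theta gives rise to two \emph{distinct} simple cycles $P_1\cup P_2$ and $P_1\cup P_3$ whose common edge is the first edge of $P_1$. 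This observation is already the contrapositive of $(c)\Rightarrow(a)$: if $G$ contains a subdivision of the diamond, then some edge lies on two simple cycles, so that implication is essentially free.

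For $(b)\Rightarrow(c)$ I again argue contrapositively. Suppose an edge $e=\{x,y\}$ lies on two distinct simple cycles $C_1\ne C_2$. First I would check that $G':=C_1\cup C_2$ is $2$-connected: removing any vertex $z$ leaves each $C_i-z$ connected, and since at least one endpoint of $e$ differs from $z$, these two pieces share a vertex, so $G'-z$ stays connected. Passing to the subgraph $H$ induced on $V(G')$ only adds edges, so $H$ is $2$-connected as well, and $H$ is an induced subgraph by construction. But $H$ contains the two distinct cycles $C_1,C_2$, whereas a graph isomorphic to a simple cycle contains exactly one cycle; hence $H$ is a $2$-connected induced subgraph that is not a cycle, contradicting $(b)$.

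The technical heart is $(a)\Rightarrow(b)$, which I phrase as the structural lemma: \emph{a $2$-connected graph $H$ that is not a cycle contains a theta subgraph.} Since $H$ is $2$-connected it has minimum degree at least $2$; as it is connected but not a single cycle it cannot be $2$-regular, so some vertex $x$ has degree at least $3$. Using $2$-connectivity, choose a cycle $C$ through $x$ and an edge $xz\notin C$ (available since $\deg x\ge 3$). Because $H-x$ is connected, there is a path from $z$ to $C$ inside $H-x$; taking it up to its first vertex $y\in C$ yields a path that together with $xz$ forms an $x$--$y$ path $Q$ internally disjoint from $C$ (the degenerate case $z\in C$, where $xz$ is a chord, works as well). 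The two arcs of $C$ between $x$ and $y$ are two further internally disjoint $x$--$y$ paths, so $Q$ and these two arcs constitute the desired theta, i.e.\ a subdivision of the diamond in $H\subseteq G$. I expect this step to be the main obstacle: the delicate point is to select the connecting path by a first-hitting (or minimality) argument so that the three $x$--$y$ paths are genuinely pairwise internally disjoint, which is what guarantees a subdivision of the diamond rather than a more degenerate configuration.
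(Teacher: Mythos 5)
Your proof is correct, but there is nothing in the paper to compare it against: the paper does not prove \cref{thm:cactus_alt_def} at all, it simply quotes the characterization from \cite{El-Mallah:1988}. So what you have written is a genuine addition rather than a variant of the paper's argument, and it is essentially the classical one. Your reformulation of \enquote{contains a subdivision of the diamond} as \enquote{contains a theta subgraph} (three pairwise internally disjoint $u$--$w$ paths, of which at most one can be a single edge in a simple graph) is exactly right, and it makes $(c)\Rightarrow(a)$ immediate, since the union of the first path with each of the other two gives two distinct simple cycles through a common edge. The step $(b)\Rightarrow(c)$ is sound: the union of two distinct simple cycles sharing an edge is $2$-connected by your argument (each $C_i-z$ is connected and the two pieces still meet in an endpoint of $e$), and passing to the induced subgraph on the same vertex set only adds edges, hence preserves $2$-connectedness, while the presence of two distinct cycles certifies that this induced subgraph is not a simple cycle. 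The heart, $(a)\Rightarrow(b)$, is your lemma that a $2$-connected graph which is not a cycle contains a theta, and your proof of it is complete: minimum degree at least $2$ together with failure of $2$-regularity yields a vertex $x$ of degree at least $3$; $2$-connectedness yields a cycle $C$ through $x$ and, via connectedness of $H-x$, a first-hitting path from the extra neighbor $z$ to a vertex $y \in C$, which together with the edge $xz$ is internally disjoint from $C$; the two arcs of $C$ complete the theta. The one point worth spelling out explicitly in a final write-up is why this theta is genuinely a subdivision of the diamond: among the three $x$--$y$ paths, the two arcs of $C$ cannot both have length one (as $C$ is a simple cycle on at least three vertices), and $Q$ cannot have length one simultaneously with an arc of length one, since then the edge $xy$ would have to lie both on $C$ and off $C$. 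You gesture at this with your simple-graph remark about theta subgraphs, but it deserves a sentence of its own, since it is exactly what rules out the \enquote{more degenerate configuration} you worry about.
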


	The following theorem proves that cacti are precisely those graphs on which every network is guaranteed to be non-degenerate:

	\begin{theorem}\label{thm:chord-free_nondeg}\label{thm:chord_possdeg}
		A network $G=(V,E,b,\pmax,\fmax)$ is non-degenerate for all choices of $b,\pmax,\fmax$ if and only if $(V,E)$ is a cactus.
	\end{theorem}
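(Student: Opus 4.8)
The plan is to prove the two implications separately, and for both to lean on the building blocks already established.

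For the direction that a cactus admits only non-degenerate networks, I would induct on the number of blocks (the maximal $2$-connected subgraphs and bridges) in the block--cut tree of $(V,E)$. By \cref{thm:cactus_alt_def} every block of a cactus is either a single edge or an induced simple cycle, so the base cases are exactly \cref{lem:K2-nondeg-rep} and \cref{lem:cycle-nondeg-rep}. For the inductive step I would pick a leaf block $B$ of the block--cut tree; it meets the rest of the graph $G'$ in exactly one cut vertex $v^*$, so $(V,E)$ is the union of $B$ (an edge or a cycle, hence non-degenerate) and $G'$ (a cactus with one block fewer, hence non-degenerate by induction) sharing only $v^*$. \cref{thm:1-con-nondeg} then yields non-degeneracy of every network on $(V,E)$; the tree case is already subsumed by \cref{cor:tree-nondeg-rep}.

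For the converse I would argue the contrapositive, generalizing \cref{ex:DC_nondeg_fails}: if $(V,E)$ is not a cactus it contains a subdivision of the diamond, with branch vertices $v,w$ of degree $3$, vertices $s,t$ of degree $2$, and five internally disjoint paths $P_{vw},P_{vs},P_{vt},P_{ws},P_{wt}$. From this subdivision I build a spanning tree $T$ of $G$ together with edges $e_v\in\delta(v)\cap T$ and $e_w\in\delta(w)\cap T$ (the first edges of $P_{vs}$ and $P_{wt}$) chosen so that deleting $e_v,e_w$ from $T$ leaves exactly three components $X_0\supseteq\Set{v,w}$, $X_c\ni s$, $X_d\ni t$, with $e_v$ joining $X_0,X_c$ and $e_w$ joining $X_0,X_d$; all remaining vertices are attached to $T$ arbitrarily. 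Setting $E_F:=T\setminus\Set{e_v,e_w}$, $V_F:=\Set{v,w}$, $\alpha_F(v):=e_v$, $\alpha_F(w):=e_w$ gives an $\alpha$-tree $F$ of size $|V|-1$. Taking $\phi:=0$, hence $f:=0$, and choosing the bounds so that exactly the constraints indexed by $E_F$ and by $v,w$ are tight (say $\fl_e=0$ for $e\in E_F$ and $\pl_v=\pl_w=0$) while every other bound is slack, makes $f$ feasible and $F$ conforming with $f$.

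It then remains to see that $f$ is not extremal. I consider the perturbation $\phi'$ equal to $0$ on $X_0$, to $c$ on $X_c$, and to $d$ on $X_d$ for scalars $c,d$. Each edge of $E_F$ lies inside one component, so $B^\top\phi'$ vanishes on $E_F$, and the only active constraints $\phi'$ can disturb are those at $v,w$. Using $(AB^\top)_{vx}=-b_{vx}$ these read $\beta_v^c c+\beta_v^d d=0$ and $\beta_w^c c+\beta_w^d d=0$, where $\beta_v^c:=\sum_{x\in N_G(v)\cap X_c}b_{vx}$ and the three analogues are sums of strictly positive susceptances, each nonempty thanks to the spoke edges. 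Since the $b_e$ are free, I choose them with $\beta_v^c\beta_w^d=\beta_v^d\beta_w^c$; then the two equations share a nontrivial solution $(c,d)\neq0$, so $\phi'$ is not a multiple of $\mathbbm{1}$ yet lies in the kernel of the active-constraint matrix. By \cref{prop:ABprop}, $B^\top\phi'\neq0$, and $f\pm\varepsilon B^\top\phi'$ are two distinct feasible points with midpoint $f$; hence $f$ is not extremal although $F$ conforms with it, so $(V,E,b)$ is $\alpha$-tree degenerate in the sense of \cref{def:DC_nondeg_extremal_rep}.

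The main obstacle I anticipate lies entirely in the converse: verifying, for an arbitrary host graph, that $T$ and the three-cluster partition can be extracted from the diamond subdivision so that simultaneously $F$ is a genuine $\alpha$-tree of size $|V|-1$, no edge of $E_F$ crosses between clusters, and no vertex besides $v,w$ becomes active. In particular I must check that chords of $G$ and vertices outside the subdivision only ever contribute further strictly positive terms to the coefficients $\beta_\bullet^\bullet$ (edges to $X_0$ contribute nothing since there $\phi'=0$), so that the single scalar condition $\beta_v^c\beta_w^d=\beta_v^d\beta_w^c$ remains solvable with $b>0$. On the forward side the only delicate point is the standard consequence of \cref{thm:cactus_alt_def} that the blocks of a cactus are precisely edges and induced cycles, which legitimises the leaf-block induction.
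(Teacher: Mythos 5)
Your proposal is correct and takes essentially the same approach as the paper: your forward direction is the paper's cut-vertex induction (base cases \cref{lem:K2-nondeg-rep} and \cref{lem:cycle-nondeg-rep} glued via \cref{thm:1-con-nondeg}, merely organized by blocks instead of by vertex count), and your converse reproduces the paper's construction of a three-cluster vertex partition from a diamond subdivision with a balanced choice of $b$ (your clusters $X_0,X_c,X_d$ and condition $\beta_v^c\beta_w^d=\beta_v^d\beta_w^c$ correspond to the paper's trees $W,S,T$ and its ratio-equal-to-one condition). The one detail you must still pin down is exactly the one you flagged: the spanning tree has to be chosen to contain $P_{ws}$ minus its $w$-end edge and $P_{vt}$ minus its $v$-end edge, so that $w_s\in X_c$ and $v_t\in X_d$ and hence all four coefficients $\beta_v^c,\beta_v^d,\beta_w^c,\beta_w^d$ are strictly positive --- your stated constraints on $T$ alone do not force this (a tree containing the edge $(w,w_s)$ would put $w_s$ in $X_0$ and, in a bare subdivision, make $\beta_w^c=0$ and the balance condition unsolvable), which is precisely what the paper guarantees by building its trees $S$ and $T$ around the full interiors of the two long paths.
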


	\begin{proof}
		We first prove the \enquote{if}-part of the statement by induction over the number $|V|$ of vertices in $V$.
		If $|V| = 2$, then the statement is true by \cref{lem:K2-nondeg-rep}. Suppose that the statement is true for all graphs with less than $n$ vertices and let $G=(V,E,b,\pmax,\fmax)$ be a network such that the graph $(V,E)$ is a cactus graph with $|V| = n$.

		If on the one hand $(V,E)$ contains a cut-vertex $v^*$ such that the subgraph induced by $V \setminus \Set{v^*}$ consists of two sets of vertices $V_1$ and $V_2$ which are disconnected, then we know by definition that the subgraphs induced by $V_1 \cup \Set{v^*}$ and $V_2 \cup \Set{v^*}$ are cacti, as well (they cannot contain a topological minor that the original graph does not contain). Thus, the statement is true by induction using \cref{thm:1-con-nondeg}.

    If on the other hand $(V,E)$ does not contain such a cut-vertex, then $(V,E)$ is $2$-connected and, since $(V,E)$ is a cactus, this means by \cref{thm:cactus_alt_def} that it is a simple cycle and the statement follows immediately from \cref{lem:cycle-nondeg-rep}.

	\begin{figure}
		\centering
	\begin{tikzpicture}[decoration=snake, segment length=0.68cm, yscale=0.8]
		\node[gridnode, very thick] (v) at (0,0) {$v$};
		\node[gridnode, very thick] (w) at (5,0) {$w$};
		\node[gridnode] (w1) at (3.5,0) {};
		\node[gridnode] (t1) at (3,2.6) {};
		\node[gridnode] (s) at (-0.5,-2.5) {};
		\node[gridnode] (t) at (2.5,4) {};
		\node[gridnode] (t2) at (0.5,3) {};
		\node[gridnode] (vt) at (-1.5,2) {$v_t$};
		\node[gridnode] (v2t) at (1,2) {};
		\node[gridnode] (wt) at (6.5,2) {$w_t$};
		\node[gridnode] (vs) at (-1.5,-2) {$v_s$};
		\node[gridnode] (ws) at (6.5,-2) {$w_s$};
		\node[gridnode] (w2) at (7,0) {};
		\node[gridnode] (t3) at (-1.1,4) {};
		\node[gridnode] (s1) at (8.1,-1) {};

		\draw
			(v)	edge (vt)
				edge[accentuating green, decorate, very thick] (v2t)
				edge (vs)
				edge[accentuating green, decorate, very thick] node[midway, above] {$W$} (w1);

		\draw
			(t2)edge[accentuating orange, decorate, very thick] (vt)
				edge (v2t)
				edge[accentuating orange, decorate, very thick] (t3);

		\draw
			(w)	edge (wt)
				edge (ws)
				edge[accentuating green, decorate, very thick] (w1);

		\draw
			(w1)edge[accentuating green, decorate, very thick] (t1);

		\draw
			(t)	edge[accentuating orange, decorate, very thick] (t2)
				edge[accentuating orange, decorate, very thick, bend left] node[midway, above right] {$T$} (wt)
				edge[] (t1);
		\draw
			(vs)edge[accentuating light blue, decorate, very thick] (s);

		\draw
			(s)	edge[accentuating light blue, decorate, very thick, bend right] node[midway, above] {$S$} (ws)
				edge (v);

		\draw
			(w2)edge[accentuating light blue, decorate, very thick] (ws)
				edge[] (wt);

		\draw
			(ws)edge[accentuating light blue, decorate, very thick] (s1);

	\end{tikzpicture}
	\caption{Given a graph that contains the diamond graph as a topological minor, we select three sets of edges $S$, $T$ and $W$ that cover all vertices as shown in the figure. We can now choose capacities and edge weights in such a way that we obtain a network that is degenerate: Thus, a solution $f$ conforming with the $\alpha$-tree $(S \cup T \cup W, \Set{v,w})$ (shown in bold) need not be extremal.}
	\label{fig:wheatstone-minor}
	\end{figure}
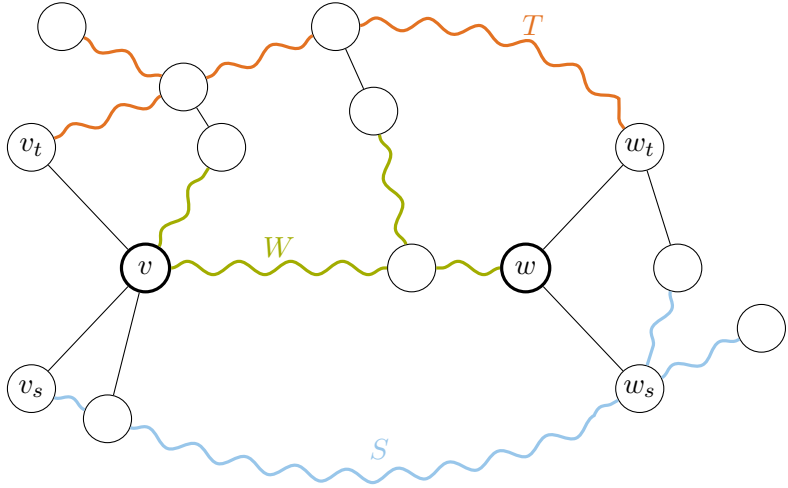

	For the \enquote{only-if}-part, assume that $(V,E)$ is not a cactus graph and thus contains the diamond graph as a topological minor.
		Let $H=(V,E')$ denote a minimal subgraph of $(V,E)$ that still contains the diamond graph as a topological minor. Then, $H$ is a subdivision of the diamond graph. Denoting the two vertices of degree 3 in $H$ by $v, w$,  this means $H$ contains three vertex-disjoint paths connecting $v$ and $w$, at least two of which have length at least 2 (\cf~\cref{fig:wheatstone-graph}). Choose two such paths $P_s$ and $P_t$ and on each of these, denote the intermediate vertex directly adjacent to $v$ by $v_s$ and $v_t$, respectively. Analogously, denote the intermediate vertex directly adjacent to $w$ by $w_s$ and $w_t$ (see \cref{fig:wheatstone-minor}). Note that it may be the case that $v_s = w_s$ and/or $v_t = w_t$.

		Denote by $P_{vw}$ the path between $v$ and $w$ in $H$ which does not contain any of $v_s,w_s,v_t,w_t$. As $H$ is a subgraph of $(V,E)$, the graphs $(V,P_{vw})$, $(V,P_s)$ and $(V,P_t)$ are equally subgraphs of $(V,E)$.

		We now build three sets $W \supset P_{vw}$, $S \supset P_s$ and $T \supset P_t$ such that every vertex in $V$ is covered by exactly one of the three sets as follows:
		Let $W \subset E$ be the set of edges of a maximal tree which contains $P_{vw}$ and does not cover any vertex that is already covered by $P_s$ or $P_t$. Analogously, let $S$ be the set of edges of a maximal tree which contains $P_s$ and does not cover any vertex already covered by $W$ or $P_t$. Finally, let $T$ be the set of edges of a maximal tree which contains $P_t$ and does not cover any vertex already covered by $W$ or $S$.

		We obtain three trees $S$, $T$, and $W$ with $v_s,w_s \in S$, $v_t,w_t \in T$, and $v,w \in W$, which cover all vertices in $G$. Now, let $E_{vs}$, $E_{ws}$, $E_{vt}$, $E_{wt}$ denote the sets of edges in $G$ that connect $v$ or $w$ with nodes covered by $S$ and $T$, respectively. We set $\fl_e=\fu_e=0$ for all edges $e$ in $W \cup S \cup T$ and $-\fl_e=\fu_e=\infty$ for all other edges. Similarly, we set $\pl_v=\pu_v=\pl_w=\pu_w = 0$ and $-\pl_{v'}=\pu_{v'}=\infty$ for all $v' \in V \setminus \Set{v,w}$. Now, let $f_e = 0$ for all $e \in E$. Then, $(W \cup S \cup T, \set{v,w})$ is an $\alpha$-tree in the resulting network that conforms with $f$.
		At the same time, we can choose $b$ such that
		\begin{equation*}
			\frac{\sum_{e \in E_{wt}} b_e}{\sum_{e \in E_{ws}} b_e}=\frac{\sum_{e \in E_{vt}} b_e}{\sum_{e \in E_{vs}} b_e} = 1.
		\end{equation*}
		Defining
		\begin{align*}
			\phi_{v'} = \begin{cases}
						0 & \text{if $v'$ is covered by $W$}\\
						1 & \text{if $v'$ is covered by $S$}\\
						-1 & \text{if $v'$ is covered by $T$}
						\end{cases}
		\end{align*}
		we have that $0 \neq \pm B^\top \phi \in \QDCf(G)$ since $(B^\top \phi)_e = 0$ for every edge $e \in W \cup S \cup T$. The only other bounds that are not $\pm \infty$ are the vertex constraints in $v$ and $w$.

	Let $N^{\inn}(v) := \Set{w \given (w,v) \in E}$ and $N^{\outn}(v) := \Set{w \given (v,w) \in E}$ for every $v \in V$.

		Then, for these constraints
		\begin{align*}
			(AB^\top \phi)_v &= \begin{multlined}[t]\sum_{\mathclap{v'\in N^{\inn}(v) \cap W}} b_{v'v}(\phi_v-\phi_{v'}) + \sum_{\mathclap{v'\in N^{\inn}(v) \cap T}} b_{v'v}(\phi_v-\phi_{v'}) + \sum_{\mathclap{v'\in N^{\inn}(v) \cap S}} b_{v'v}(\phi_v-\phi_{v'})\\
							- \sum_{\mathclap{v'\in N^{\outn}(v) \cap W}} b_{vv'}(\phi_{v'}-\phi_v) - \sum_{\mathclap{v'\in N^{\outn}(v) \cap T}} b_{vv'}(\phi_{v'}-\phi_v) - \sum_{\mathclap{v'\in N^{\outn}(v) \cap S}} b_{vv'}(\phi_{v'}-\phi_v)
							\end{multlined}\\
					&= \begin{multlined}[t] \sum_{v'\in N^{\inn}(v) \cap T} b_{v'v} \cdot 1 + \sum_{v'\in N^{\inn}(v) \cap S} b_{v'v}\cdot (-1)\\
							- \sum_{v'\in N^{\outn}(v) \cap T} b_{vv'}\cdot(-1) - \sum_{v'\in N^{\outn}(v) \cap S} b_{vv'}\cdot 1
							\end{multlined}\\
					&=\sum_{e \in E_{vt}} b_e - \sum_{e \in E_{vs}} b_e = 0
		\end{align*}
		and analogously $(AB^\top \phi)_w = 0$.
		Thus, $f = \nicefrac{1}{2} (B^\top\phi + (-B^\top\phi))$ is not extremal, which proves that the network $G$ is degenerate.
	\end{proof}

	The following fact is generally well-known. Since we could not find a suitable reference, however, we have added a short proof of our own, which provides us with a simple algorithm to determine wether non-degenerateness can be guaranteed for a given graph.

	\begin{proposition}
		It can be decided in linear time whether a graph $G=(V,E)$ is a cactus.
	\end{proposition}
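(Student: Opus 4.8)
The plan is to reduce the decision to the \emph{block} (biconnected component) decomposition of $G$ and then to read off the cactus property from a single arithmetic test on each block. Since a disconnected graph is a cactus exactly when each of its connected components is, and the components can be found in time $O(|V|+|E|)$, I may assume $G$ connected. I would then invoke the classical depth-first-search procedure of Hopcroft and Tarjan, which computes all maximal $2$-connected subgraphs and all bridges of $G$ — its blocks — in time $O(|V|+|E|)$. I will use three standard facts about this decomposition: the edge sets of the blocks partition $E$, every simple cycle of $G$ lies within a single block, and any $2$-connected subgraph is contained in a single block (any two of its edges lie on a common cycle and hence in the same block).

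Next I would translate the property ``$G$ is a cactus'' into a local condition on the blocks, namely that \emph{every block is either a single edge or a simple cycle}, using the characterization in \cref{thm:cactus_alt_def}. The key observation making \cref{thm:cactus_alt_def}(b) directly applicable is that a block $B=(V_B,E_B)$ is in fact a \emph{vertex-induced} subgraph: if $u,v\in V_B$ and $\{u,v\}\in E$, then $u$ and $v$ lie on a common cycle inside the $2$-connected graph $B$, so $\{u,v\}$ closes a cycle with a $u$--$v$ path in $B$ and therefore belongs to $E_B$. Consequently every block with at least two edges is an induced $2$-connected subgraph, which by \cref{thm:cactus_alt_def} must be a simple cycle; conversely, if every block is an edge or a cycle, then every induced $2$-connected subgraph — being contained in one block, and a cycle having no proper $2$-connected subgraph — is itself a cycle, so $G$ is a cactus.

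The final ingredient is that the per-block condition reduces to counting. A block $B$ with at least two edges is $2$-connected, hence has minimum degree at least $2$, so that $2|E_B|=\sum_{v\in V_B}\deg_B(v)\geq 2|V_B|$, i.e. $|E_B|\geq|V_B|$. Equality forces every vertex of $B$ to have degree exactly $2$, making $B$ a disjoint union of cycles and, by connectedness, a single cycle; and any cycle satisfies $|E_B|=|V_B|$. A bridge satisfies $|E_B|=1$. Thus a block is an edge or a simple cycle precisely when $|E_B|\leq|V_B|$, and $G$ is a cactus if and only if this inequality holds for all blocks. Iterating once over the blocks to perform this check costs $O(|V|+|E|)$, since $\sum_B|E_B|=|E|$ and $|V_B|\leq|E_B|+1$ for each block, so the total size of all blocks is $O(|V|+|E|)$.

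The only genuinely delicate point I expect is the equivalence established in the second paragraph: matching the abstract condition of \cref{thm:cactus_alt_def}(b) with the concrete notion of a block. This hinges on verifying that blocks are vertex-induced and that a cycle contains no proper $2$-connected subgraph; once these are in place, the linear-time bound follows immediately from the Hopcroft--Tarjan decomposition and the counting test above.
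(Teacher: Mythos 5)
Your proof is correct, but it follows a genuinely different route from the paper's. The paper works directly with characterization c) of \cref{thm:cactus_alt_def} (no edge lies on two simple cycles): it runs a single DFS and, for each back-edge, re-traces the corresponding tree path while marking edges; encountering an already-marked edge certifies two simple cycles sharing an edge, and otherwise every edge lies on at most one cycle. You instead invoke the Hopcroft--Tarjan block decomposition as a black box and work with characterization b) (every induced $2$-connected subgraph is a simple cycle), reducing the cactus test to the arithmetic condition $|E_B|\leq|V_B|$ on each block. Your approach is more modular and its correctness argument is arguably tighter: you explicitly verify the two delicate points (blocks are vertex-induced, and a cycle has no proper $2$-connected subgraph) needed to match b) with the block structure, whereas the paper's marking procedure leaves the linear-time accounting of the recursive re-tracing somewhat informal (it is saved by the fact that each edge is marked at most once before the algorithm terminates). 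The paper's approach, in exchange, is fully self-contained: it needs only plain DFS rather than the biconnected-components algorithm, which keeps the proof elementary at the price of a sketchier argument.
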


	\begin{proof}
		Perform a depth-first search (DFS) in time $\mathcal{O}(|E|+|V|)$. Choose a back-edge $\Set{v,w}$, \ie an edge that connects $v$ to some previously-visited vertex $w$. Then, $w$ must be a predecessor of $v$, otherwise we would have visited $v$ from $w$ and the edge $\Set{v,w}$ would be in the DFS tree. Re-trace the path from $v$ back to $w$, marking edges as we go. If we reach a vertex $v'$ incident with another back-edge $\Set{v',w'}$, then recursively re-trace its cycle before continuing on the original path. If we reach the vertex $w$, then chose a new back-edge and repeat until all back-edges have been marked.

		If at some point we reach an edge that has already been marked, then $G$ was not a cactus graph (the edge belonged to two distinct simple cycles). Otherwise, $G$ is a cactus graph: Every simple cycle in $G$ produces a back-edge, we have marked every back-edge together with the cycle that it belongs to and no edge has been marked twice. Hence, no edge belongs to two simple cycles and thus by \cref{thm:cactus_alt_def}, the graph $G$ is a cactus.
	\end{proof}

	We have already encountered the diamond graph, the forbidden minor which characterizes cacti, in \cref{ex:DC_nondeg_fails}: It was our first example of a network that is degenerate. \Cref{thm:chord-free_nondeg} now shows that this is in fact the characterizing structure for non-degenerateness, at least from a topological point of view. Cactus graphs, for which we can guarantee non-degenerateness under all circumstances, are closely related to several results from the context of electrical circuits and related areas:

	They represent exactly those graphs that, if we add a single edge, remain \emph{of series parallel type} (or \emph{confluent}) in the sense of \cite{Duffin:1965}. A graph is of series parallel type if it can be constructed by iteratively replacing an edge by two sequential or two parallel edges, starting from an edge connecting a single vertex with itself. 
	Such networks are those, for which the equivalent resistance can be computed by iteratively applying Ohm's laws for resistors in series and parallel.
Note that the diamond graph itself is also of series parallel type, but by adding a single edge we can obtain $K_4$, which is no longer of series parallel type.

\section{Beyond Cactus Graphs} \label{sec:beyond}

	We can conclude that the well-known (but limited) family of cactus graphs completely characterizes those networks on which optimization problems over electrical power flows (as represented by the differential flow polyhedron) are \enquote{well-behaved} from a \emph{purely topological perspective}. However, network topology is only one aspect of non-degenerateness, the exact values of the vector $b$ of edge weights highly matter, as well: Indeed, all networks are non-degenerate unless the weight vector $b$ is chosen from the union of finitely many lower-dimensional subspaces of the parameter space (one subspace for every diamond minor in the network). The network from \cref{ex:DC_nondeg_fails}, for instance, is almost always non-degenerate unless $\frac{b_{wt}}{b_{ws}}=\frac{b_{vt}}{b_{vs}}$. In the theory of electrical circuits, this case is known as the one where the Wheatstone bridge is \emph{balanced} \cite{Duffin:1965}.
Furthermore, note that by \cref{thm:alpha-tree-ex}, even in this case, every extremal point corresponds to an $\alpha$-tree (only the reverse might not always hold).

	This implies that we can in practice expect non-degenerateness to hold not only for the (quite limited) family of networks  with cactus topology and every network can be made non-degenerate by slightly disturbing the edge weights.

  For a specific network, however, it is generally \NP-complete to determine whether or not it is non-degenerate, as we will see in Section \ref{sec:hardness}.
  Still, we can identify \emph{sufficient conditions} to identify $\alpha$-trees for which every conforming differential flow is always extremal (regardless of whether or not the underlying network is non-degenerate).

\subsection{Sufficient Conditions} \label{sec:sufficient}
In this subsection, we present some conditions for a given $\alpha$-tree under which a solution $f \in \QDCf(G)$ that conforms to it is guaranteed to be extremal. We start by observing the following:

\begin{remark}\label{rem:f_extremal_g_zero}
	Let $G=(V,E,b,\pmax,\fmax)$ be a network and $f \in \QDCf(G)$. Then $f$ is \emph{not} extremal if and only if there exists a differential flow $g \neq 0$ (which itself does not need to be feasible) such that $f \pm g \in \QDCf(G)$. In particular, for every vertex $v$ with $-(Af)_v \in \Set{\pl,\pu}$, we must have $A_v g = 0$ and for every edge $(v,w) \in E$ with $f_{vw} \in \Set{\fl_{vw},\fu_{vw}}$, we must have $g_{vw} = 0$.
\end{remark}

In light of the remark above, it makes sense to investigate the restrictions which an $\alpha$-forest $F=(E_F,V_F)$ that conforms with $f$ imposes on a differential flow $g$ with $f \pm g \in \QDCf(G)$. For a potential $\phi$ that induces $g$, the above observation implies for every connected component $S$ of the graph $(V,E_F)$ and $v,w \in S$ that $\phi_v = \phi_w$ (since $f_{vw} \in \Set{\fl_{vw},\fu_{vw}}$ for every $(v,w) \in E_F$), \ie~$\phi$ is constant within any connected component of $(V,E_F)$.

In order to represent restrictions imposed by $F$ on $\phi$ between different connected components of $(V,E_F)$, we need the following definition of a \emph{generalized differential flow} which is induced by a potential $\phi$ via a \emph{generalized \tension vector} $b$.

\begin{definition}[Generalized Differential Flow]
	Let $V$ be a finite set and $b \in \posreals^{V \times V}$. A potential $\phi \in \reals^V$ induces a \emph{generalized differential flow} $f \in \reals^{V \times V}$ on $V$ with respect to $b$ by $f_{vw} = b_{vw} (\phi_w-\phi_v)$. We say that $f$ is feasible if
	\begin{equation}
		\sum_{w \in V} f_{vw} = 0\label{eq:gen_diff_flow}
	\end{equation}
	for all vertices $v \in V$.
\end{definition}

Note that in contrast to the \tension vector in a network, a generalized \tension vector is defined for all pairs $(v,w) \in V \times V$ (even if $(v,w) \notin E$). In particular, there may be pairs $(v,w)$ with $b_{vw} \neq b_{wv}$ and, as a consequence, $f_{vw} \neq -f_{wv}$ (one could even be 0).

Furthermore, equation \cref{eq:gen_diff_flow} is similar to a common flow conservation constraint, but not identical: For every $v \in V$, we only sum over the \enquote{outgoing} entries $f_{vw}$ for $w \in V$ (which may be positive or negative). In particular, the \enquote{incoming} flow value $f_{wv}$ does not appear in the constraint  \cref{eq:gen_diff_flow} for vertex $v$ but only in the corresponding constraint for vertex $w$.

Note further that for any $\phi_0 \in \reals$, the trivial potential $\phi \equiv \phi_0$ induces the generalized differential flow $f \equiv 0$ (independently of $b$), which is always feasible. Analogously to \cref{def:dc_ext_forest}, the following tree structure can be used to characterize the cases where $f \equiv 0$ is the unique feasible generalized flow (as we will see in \cref{lem:gendiffflow_trivsol}):

\begin{definition}
	Let $T=(V,E)$ be a weakly connected directed graph that does not contain an undirected cycle. We then call $T$ a \emph{directed tree}. If furthermore $|\delta^{\outn}(v)| \leq 1$ for all $v \in V$, then $T$ is an \emph{anti-arborescence}.
\end{definition}

In particular, if $T$ is an anti-arborescence, then there is a unique vertex $v_0 \in T$ such that $|\delta^{\outn}(v)| = 0$ (the \emph{sink} of the anti-arborescence).

\begin{lemma}
	\label{lem:gendiffflow_trivsol}
	Let $V$ be a finite set, $b \in \posreals^{V \times V}$, and $E := \Set{(v,w) \in V \times V \given v \neq w \text{ and } b_{vw} > 0}$ such that $(V,E)$ contains a spanning anti-arborescence as a subgraph. If $\phi$ induces a feasible generalized differential flow $f$ on $V$ with respect to $b$, then $\phi \equiv \phi_0$ for some $\phi_0 \in \reals$ and as a consequence $f \equiv 0$.
\end{lemma}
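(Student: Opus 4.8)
The plan is to prove this by a directed maximum/minimum principle, exploiting the fact that the feasibility constraint \cref{eq:gen_diff_flow} at a vertex $v$ only involves the edges \emph{leaving} $v$. First I would rewrite feasibility at $v$ as $\sum_{w \in V} b_{vw}(\phi_w - \phi_v) = 0$. Setting $M := \max_{v \in V}\phi_v$ and choosing a maximizer $v^*$, every summand $b_{v^* w}(\phi_w - \phi_{v^*})$ is nonpositive, since $b \geq 0$ and $\phi_w - \phi_{v^*} \leq 0$; as the summands add to zero, each must vanish. Hence $\phi_w = M$ for every $w$ with $b_{v^* w} > 0$, i.e.\ for every out-neighbor of $v^*$ in $E$ (recall that $E$ consists exactly of the pairs carrying positive weight). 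In short: the maximum propagates along outgoing edges.

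Next I would use the anti-arborescence structure to push this propagation onto the sink. Each vertex $v \neq v_0$ has out-degree exactly $1$ in the spanning anti-arborescence $T$ (its out-degree is at most $1$ by definition, and equals $1$ because $T$, being a spanning tree, distributes its $|V|-1$ edges among the $|V|-1$ non-sink vertices); moreover, since $T$ is acyclic, following the unique outgoing $T$-edges from any vertex yields a finite directed path terminating at $v_0$. As every edge of $T$ lies in $E$, these edges carry strictly positive weight, so the propagation step applies at each vertex along the path. Starting from the maximizer $v^*$ and iterating, I obtain $\phi = M$ along the whole path and in particular $\phi_{v_0} = M$.

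The symmetric argument applied to $m := \min_{v \in V}\phi_v$ — at a minimizer each summand is now nonnegative, forcing the minimum to propagate along outgoing edges — yields $\phi_{v_0} = m$ in the same way. Combining the two identities gives $M = \phi_{v_0} = m$, so $\phi$ is constant, say $\phi \equiv \phi_0$, and then $f_{vw} = b_{vw}(\phi_0 - \phi_0) = 0$ for all $v,w \in V$, i.e.\ $f \equiv 0$.

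The one genuinely delicate point is that, because feasibility constrains only the outgoing edges, a naive maximum principle propagates an extremal value in a single direction and cannot on its own reach all vertices. The anti-arborescence hypothesis is exactly what resolves this: it provides a common vertex $v_0$ reachable by a directed path from \emph{every} vertex, so that the one-directional propagation of both the maximum \emph{and} the minimum terminate at the same $v_0$ and are therefore forced to coincide. I would take care to note that each propagation step legitimately invokes feasibility (an equation available at every vertex, including the interior ones of the path) and that the traversed $T$-edges have strictly positive weight, so that the vanishing of a single summand really does force equality of the two potentials.
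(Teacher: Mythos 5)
Your proposal is correct and follows essentially the same argument as the paper's proof: a one-directional maximum principle (feasibility at a maximizer forces all terms $b_{v w}(\phi_w-\phi_v)$ to vanish, so the maximum propagates along outgoing edges of positive weight), pushed along the unique outgoing $T$-edges to the sink $v_0$, and then repeated symmetrically for the minimum so that $\max\phi = \phi_{v_0} = \min\phi$. The only difference is presentational — the paper phrases the propagation as a proof by contradiction while you argue directly — so nothing of substance separates the two.
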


\begin{proof}
	Let $\phi \in \reals^V$ be a potential that induces a feasible generalized differential flow. Let $v_0 \in V$ be the sink of the anti-arborescence $F$. We prove that $\max_{v \in V} \phi_v = \phi_{v_0}$, the argument to see that $\min_{v \in V} \phi_v = \phi_{v_0}$ is identical. Let $v_1 \in \argmax_{v \in V} \phi_v$, \ie $\phi_{v_1} \geq \phi_w$ for all $w \in V$. If $\phi_{v_1} = \phi_{v_0}$, then we are done. Otherwise, in particular $v_1 \neq v_0$ and, since $v_0$ is the unique sink, there exists $v_2 \in V$ with an edge $(v_1,v_2) \in T$. By the definition of a feasible generalized differential flow and by maximality of $\phi_{v_1}$, it follows
	\begin{equation*}
		0 = \sum_{w \in V\setminus\Set{ v_1 }} \underbrace{\underbrace{b_{v_1w}}_{\geq 0} \underbrace{(\phi_w-\phi_{v_1})}_{\leq 0}}_{\leq 0}.
	\end{equation*}
	Hence, in particular $b_{v_1v_2} (\phi_{v_2}-\phi_{v_1}) = 0$. Since furthermore $(v_1,v_2) \in E$, we have that $b_{v_1v_2} > 0$ which implies that $\phi_{v_2}=\phi_{v_1} > \phi_{v_0}$ and thus in particular $v_2 \neq v_0$. Hence, $v_2 \in \argmax_{v \in V} \phi_v$ and we can apply the same argument to find a vertex $v_3$ with $\phi_{v_2} = \phi_{v_3}$. Since $T$ is an anti-arborescence and we only traverse edges in $T$ in the direction of the sink, we never reach the same vertex twice. By iterating this argument, we therefore finally obtain that $\phi_{v_1} = \phi_{v_2} =  \phi_{v_3} = \dotsb = \phi_{v_0}$, a contradiction.
\end{proof}

The above lemma can be used to derive a sufficient condition for extremality of a differential flow $f \in \QDCf(G)$ based on the existence of an $\alpha$-tree $F$ with one additional property (which in particular excludes the situation encountered in \cref{ex:DC_nondeg_fails}).

\begin{theorem}\label{thm:suff_cond}
	Let $G=(V,E,b,\pmax,\fmax)$ be a network, $f \in \QDCf(G)$ and $F=(E_F,V_F)$ an $\alpha$-tree in $G$ that conforms with $f$. Furthermore, let $F$ be such that every connected component of the graph $(V,E_F)$ contains at most one vertex which is active in $F$. Then, $f$ is extremal.
\end{theorem}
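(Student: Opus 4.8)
The plan is to invoke \cref{rem:f_extremal_g_zero} and reduce the claim to showing that every differential flow $g$ with $f \pm g \in \QDCf(G)$ vanishes. So let $\phi$ induce such a $g$, that is $g_{vw} = b_{vw}(\phi_w - \phi_v)$. Since $F$ conforms with $f$, the remark forces $g_e = 0$ for every $e \in E_F$ and $A_v g = 0$ for every active vertex $v \in V_F$. The edge conditions say $\phi$ is constant on each connected component of the forest $(V,E_F)$; I would denote these components $S_1,\dots,S_m$ and write $\psi_j$ for the common value of $\phi$ on $S_j$. It then suffices to show that all $\psi_j$ coincide, for then $\phi \equiv \const$ and hence $g=0$ (by \cref{prop:ABprop}), making $f$ extremal.

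First I would record the counts. As $(V,E_F)$ is acyclic with $m$ components, $|E_F| = |V|-m$, and since $F$ is an $\alpha$-tree, $|V_F| = (|V|-1)-|E_F| = m-1$. Combined with the hypothesis that each component contains at most one active vertex, a pigeonhole argument forces exactly one component, say $S_m$, to contain no active vertex, while every other component $S_j$ carries a unique active vertex $v_j$. I would then contract each $S_j$ to a super-vertex and set up a generalized differential flow on $\Set{S_1,\dots,S_m}$ with potential $\Psi_j := \psi_j$, using the generalized \tension vector $\beta_{S_j S_l} := \sum_{(v_j,w)\in E,\, w\in S_l} b_{v_j w} + \sum_{(w,v_j)\in E,\, w\in S_l} b_{w v_j}$ for $j<m$ (the total elasticity of edges joining the active vertex $v_j$ to $S_l$) and $\beta_{S_m\cdot} := 0$.

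Because $\phi$ is constant on each component, the only $g$-carrying edges incident to $v_j$ are those joining $v_j$ to other components, and a direct computation shows that the net flow created at $v_j$ equals $\sum_{l} \beta_{S_j S_l}(\Psi_l - \Psi_j)$. Hence $A_{v_j}g = 0$ is exactly the feasibility condition of the generalized flow at $S_j$, while feasibility at $S_m$ is trivial since $\beta_{S_m\cdot}=0$; so $\Psi$ induces a feasible generalized differential flow. The decisive step is then to produce a spanning anti-arborescence so that \cref{lem:gendiffflow_trivsol} applies. Here the vertex-orientation map does the work: for each $v_j$ the edge $\alpha_F(v_j)$ is incident to $v_j$, avoids $E_F$, and — since $E_F \cup \alpha_F(V_F)$ is acyclic while $S_j$ is already connected through $E_F$ — must join $S_j$ to a different component $S_{\sigma(j)}$, so $\beta_{S_j S_{\sigma(j)}}>0$. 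Contracting the components of the spanning tree $E_F \cup \alpha_F(V_F)$ turns the $m-1$ edges $\alpha_F(v_j)$ into a spanning tree on the super-vertices; orienting each as $S_j \to S_{\sigma(j)}$ gives every $S_j$ with $j<m$ out-degree one and leaves $S_m$ with out-degree zero, i.e.\ a spanning anti-arborescence with sink $S_m$ whose edges all have positive $\beta$. Then \cref{lem:gendiffflow_trivsol} yields $\Psi \equiv \const$, so all $\psi_j$ agree, $g=0$, and $f$ is extremal.

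I expect the main obstacle to be the bookkeeping in the contraction: verifying that the correctly weighted constraint $\sum_{l}\beta_{S_j S_l}(\Psi_l-\Psi_j)=0$ is genuinely equivalent to $A_{v_j}g=0$, and checking that the $\alpha_F$-edges descend to a \emph{spanning} anti-arborescence (with the right sink $S_m$) after collapsing the $E_F$-components. Everything else — the counting and the final application of \cref{rem:f_extremal_g_zero} and \cref{lem:gendiffflow_trivsol} — is routine once this correspondence is set up.
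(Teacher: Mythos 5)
Your proposal is correct and takes essentially the same route as the paper's proof: reduce via \cref{rem:f_extremal_g_zero}, contract the connected components of $(V,E_F)$, endow the component set with a generalized \tension vector built from the edges at each component's unique active vertex, translate $A_{v}g=0$ into feasibility of the induced generalized differential flow, obtain a spanning anti-arborescence from the $\alpha_F$-edges, and conclude with \cref{lem:gendiffflow_trivsol}. The only (cosmetic) difference is that you make the counting explicit ($|V_F|=m-1$, so exactly one component is the sink), whereas the paper derives connectivity, acyclicity and out-degree at most one of the contracted graph directly from the maximality of $F$ without singling out the sink in advance.
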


\begin{proof}
	In order to show that $f$ is extremal, we need to prove that no non-zero differential flow $g$ exists such that $f\pm g \in \QDCf(G)$ (see \cref{rem:f_extremal_g_zero}). Suppose otherwise and let $\phi \in \reals^V$ be a potential that induces $g$. Denote by $\mathcal{S}$ the set of connected components of the graph $(V,E_F)$.

	By our assumption, every connected component $S$ contains at most one vertex which is active in $F$. Denote this vertex (if it exists) by $v_S$. For two connected components $S,T \in \mathcal{S}$, we define the value $b'_{ST}$ as follows (we write $\delta(S) := \bigcup_{v \in S} \delta(v)$ for any $S \subseteq V$):
	\begin{align*}
		b'_{ST} &:= \begin{cases}
					\sum_{e \in \delta(v_S) \cap \delta(T)} b_e & \text{if $S$ contains a vertex $v_S$ active in $F$}\\
					0 & \text{else.}
					\end{cases}
	\end{align*}
	Note that it may be the case that $b'_{ST} \neq b'_{TS}$, in particular one of the two may be $0$ (or both), but in any case $b' \geq 0$ since $b > 0$.

	Let $\alpha_F$ be a vertex-orientation map for $F$ and denote by $F'$ the set of pairs of connected components for which there exists an active vertex in the first component which is mapped by $\alpha_F$ to an edge connecting it with the second component:
	\begin{align*}
		F' &:= \Set*{(S,T) \in \mathcal{S}\times\mathcal{S} \given \exists (v,w) \in \alpha_F(V_F \cap S): (v,w) \in (S \times T) \cup (T\times S)}
	\end{align*}

	By the definition of $\alpha$-trees, the set $E_F \cup \alpha_F(V_F)$ does not contain an undirected cycle and since $F$ is maximal, it spans $V$. This implies that the graph $(\mathcal{S},F')$ is weakly connected and does not contain a cycle, either. Furthermore, since every connected component $S \in \mathcal{S}$ contains at most one active vertex, there is at most one $T \in \mathcal{S}$ such that $(S,T) \in F'$ for every $S \in \mathcal{S}$. The graph $(\mathcal{S}, F')$ is thus an anti-arborescence.

	Returning to the differential flow $g$, \cref{rem:f_extremal_g_zero} implies for any $v,w \in V$ with $(v,w) \in E_F$ that $g_{vw} = 0$ and hence $\phi_v=\phi_w$.
	Let $\phi' \in \reals^{\mathcal{S}}$ be defined by $\phi'_S := \phi_v$ for any $S \in \mathcal{S}$ and $v \in S$ (note that this is well-defined, since $\phi_v=\phi_w$ for any $S \in \mathcal{S}$ and $v,w \in S$).
	Now, observe that $\phi'$ induces a generalized differential flow on the set $\mathcal{S}$ with respect to $b'$. Thus, for all $S \in \mathcal{S}$, one of the following two statements holds:
	\begin{itemize}
		\item Either $S$ contains no active vertex and then $b'_{ST} = 0$ for all $T \in \mathcal S$ and hence $\sum_{T \in \mathcal{S}\setminus\Set{S}} b'_{ST} (\phi_T - \phi_S) = 0$, or
		\item $S$ contains exactly one active vertex $v_S$ and
		\begin{align*}
			\sum_{T \in \mathcal{S}\setminus\Set{S}} b'_{ST} (\phi_T - \phi_S) &= \sum_{T \in \mathcal{S}\setminus\Set{S}} \sum_{e \in \delta(v_S) \cap \delta(T)} b_e (\phi_T - \phi_S)\\
			&= \sum_{T \in \mathcal{S}\setminus\Set{S}} \left(\ \quad\sum_{\mathclap{\substack{w \in\\N^{\outn}(v_S) \cap T}}}\ b_{v_Sw} (\phi_w - \phi_{v_S}) - \ \sum_{\mathclap{\substack{w \in\\N^{\inn}(v_S) \cap T}}} \ b_{wv_S} (\phi_{v_S} - \phi_w)\right)\\
			&= \sum_{w \in N^{\outn}(v_S)} b_{v_Sw} (\phi_w - \phi_{v_S}) - \sum_{w \in N^{\inn}(v_S)} b_{wv_S} (\phi_{v_S} - \phi_w)\\
			&= \sum_{w \in N^{\outn}(v_S)} g_{v_Sw} - \sum_{w \in N^{\inn}(v_S)} g_{wv_S} = 0
		\end{align*}
		 where the last equality follows from \cref{rem:f_extremal_g_zero}.
	\end{itemize}

	Hence, the generalized differential flow induced by $\phi'$ is feasible.

	Let $E' := \Set{(S,T) \in \mathcal{S} \times \mathcal{S} \given S \neq T \text{ and } b'_{ST} > 0}$ and observe that $F' \subseteq E'$, since the definition of $F'$ captures exactly those pairs $(S,T)$ for which the first case in the definition of $b'$ applies.
	Hence, the anti-arborescence $(\mathcal{S}, F')$ is a subgraph of $(\mathcal{S}, E')$ and we now obtain from \cref{lem:gendiffflow_trivsol} that there exists $\phi_0 \in \reals$ such that $\phi' \equiv \phi_0$ and thus $\phi \equiv \phi_0$ which implies that $g\equiv 0$.
\end{proof}

Using \cref{thm:suff_cond}, we can prove another sufficient condition, which excludes a different aspect of \cref{ex:DC_nondeg_fails}. In particular, this also implies that every network with maximal degree $\leq 2$ is non-degenerate, providing an alternative proof for \cref{lem:K2-nondeg-rep}, \cref{cor:tree-nondeg-rep}, and \cref{lem:cycle-nondeg-rep}.

\begin{theorem}\label{thm:suff_cond_small_deg}
	Let $G=(V,E,b,\pmax,\fmax)$ be a network, $f \in \QDCf(G)$ and $F=(E_F,V_F)$ an $\alpha$-tree in $G$ that conforms with $f$. Furthermore, let $F$ be such that there is at most one $v \in V_F$ with $\deg{v} \geq 3$. Then, $f$ is extremal.
\end{theorem}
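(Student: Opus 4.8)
The plan is to argue by contradiction using \cref{rem:f_extremal_g_zero}: assuming $f$ is not extremal, I fix a differential flow $g \neq 0$ with $f \pm g \in \QDCf(G)$ together with a potential $\phi$ inducing $g$, and aim to show $\phi$ is constant (hence $g \equiv 0$). The two conforming conditions translate, via \cref{rem:f_extremal_g_zero}, into: $\phi_v = \phi_w$ for every $(v,w) \in E_F$, so $\phi$ is constant on each component of $(V,E_F)$; and, rewriting the tight vertex constraint at an active vertex $v$ as a weighted-Laplacian identity, $\sum_{w \in N_G(v)} \hat b_{vw}(\phi_w - \phi_v) = 0$ with all $\hat b_{vw} > 0$, for every $v \in V_F$ (here $\hat b_{vw}$ is the \tension of the unique edge between $v$ and $w$).

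The key structural observation is that an active vertex of degree at most $2$ carries at most one incident edge in $E_F$, so once all but one of the potential values around such a vertex are known to equal $\phi_v$, its Laplacian identity collapses to a single term and forces equality of $\phi$ across the remaining edge. I make this precise by a closure: starting from the equalities on $E_F$, I repeatedly \emph{fire} a degree-$2$ active vertex as soon as one of its two incident edges already lies in the current equality set, adding its other (then necessarily orientation-) edge; I deliberately never fire the at-most-one active vertex $v^*$ of degree $\geq 3$. Since a fired degree-$2$ vertex has no $E_F$-edge and thus only adds its orientation edge $\alpha_F(v)$, every added equality is an edge of the spanning tree $\mathcal{T} := E_F \cup \alpha_F(V_F)$; hence the resulting equivalence classes (``blocks'') are connected in $\mathcal{T}$, $\phi$ is constant on each block, and contracting $\mathcal{T}$ along the blocks yields a spanning tree on the block set $\mathcal{B}$ whose edges are exactly the surviving orientation edges $\alpha_F(v)$ of the \emph{un-fired} active vertices.

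From here I mirror the proof of \cref{thm:suff_cond}. Summing the Laplacian identities over the active vertices of a block $B$, all within-block terms cancel, so $\phi$ induces a feasible generalized differential flow on $\mathcal{B}$ with respect to the aggregated generalized \tension $b'_{BB'} := \sum_{v \in V_F \cap B} \sum_{e \in \delta(v) \cap \delta(B')} \hat b_{e} \geq 0$; note that a fired vertex has all its neighbours inside its own block and therefore contributes no cross-block weight, so only un-fired active vertices generate edges of the support. Orienting each surviving edge $\alpha_F(v)$ out of the block of $v$ turns the block spanning tree into the candidate anti-arborescence, which is contained in the support of $b'$. The decisive point is that every block contains at most one un-fired active vertex: an un-fired degree-$2$ vertex is isolated in the equality graph (if a neighbour fired onto it, it would itself have fired) and hence is a singleton block, while $v^*$ is the unique possible high-degree active vertex — this is exactly where the hypothesis ``at most one $v \in V_F$ with $\deg v \geq 3$'' is used. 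Consequently every block has out-degree at most $1$, the oriented block tree is a spanning anti-arborescence, and \cref{lem:gendiffflow_trivsol} forces the generalized flow to be trivial, so $\phi$ is constant on $\mathcal{B}$ and therefore on $V$, contradicting $g \neq 0$.

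The main obstacle is the bookkeeping around the closure: I must verify that firing only degree-$2$ vertices keeps every added equality inside $\mathcal{T}$ (so the blocks stay $\mathcal{T}$-connected and their contraction is genuinely a tree) and that leaving $v^*$ unfired still leaves at most one un-fired active vertex per block. It is essential to contract along the \emph{forced equalities} rather than along the components of $(V,E_F)$: the coarser aggregation used in \cref{thm:suff_cond} is insufficient here, since collapsing several active vertices of a single $(V,E_F)$-component into one conservation law destroys the individual forced equalities. A star of degree-$2$ active vertices around one component, for instance, is a feasible generalized flow on the components yet forces all potentials equal only through the per-vertex equalities — so handling the closure correctly is precisely the step that makes the degree hypothesis usable.
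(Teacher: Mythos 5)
Your proof is correct, but it is packaged quite differently from the paper's, so a comparison is worthwhile. The paper proves \cref{thm:suff_cond_small_deg} by induction on $|V_F|$: if some active vertex $v$ of degree at most $2$ is incident with an active edge, then the tight edge constraint and the tight vertex constraint at $v$ together force the flow value on $\alpha_F(v)$ in any convex decomposition of $f$; the paper therefore fixes the capacity bounds of $\alpha_F(v)$ to $(f_{\alpha_F(v)},f_{\alpha_F(v)})$ in an auxiliary network $H$, notes that $f$ is extremal in $\QDCf(G)$ if and only if it is extremal in $\QDCf(H)$, and passes to the smaller conforming $\alpha$-tree $(E_F \cup \{\alpha_F(v)\}, V_F \setminus \{v\})$. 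This is exactly one step of your firing closure, realized as a network modification instead of as a forced potential equality. When no such vertex remains, every component of the enlarged edge set contains at most one active vertex and the paper invokes \cref{thm:suff_cond} as a black box; at that point those components are precisely your blocks, so the contraction, the anti-arborescence of surviving orientation edges, and the appeal to \cref{lem:gendiffflow_trivsol} that you carry out explicitly are all hidden inside that citation. Your route buys a one-pass argument that never modifies the network: everything happens at the level of the potential $\phi$ from \cref{rem:f_extremal_g_zero}, so no transfer-of-extremality argument between $G$ and an auxiliary network is needed, and the decisive combinatorial fact (at most one un-fired active vertex per block, hence out-degree at most one in the contracted tree) is made visible. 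The paper's route buys brevity, since each induction step is a short convexity argument and \cref{thm:suff_cond} is reused verbatim; your closing remark about why \cref{thm:suff_cond} cannot be applied directly to the original $\alpha$-tree identifies exactly the obstacle that the induction is designed to remove. One loose end in your write-up: the firing rule is stated only for active vertices of degree exactly $2$, so an active vertex of degree $1$ never fires; since $\alpha_F$ is injective, nothing can fire onto its unique incident edge either, so it remains a singleton block with out-degree $1$ and your counting still works --- but this case should be stated explicitly.
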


\begin{proof}
	We prove the statement by induction over the number $|V_F|$ of active vertices in $F$. If $|V_F|=0$, \ie there is no vertex active in $F$, then $(V,E_F)$ is in fact a spanning tree and, since any potential $\phi$ that induces a differential flow $g$ with $f \pm g \in \QDCf(G)$ has to be constant within connected components of $(V,E_F)$, $f$ is extremal.

	Now, let $|V_F|=k$. We distinguish two cases:
	First, assume that $V_F$ contains no vertex of degree at most $2$ that is incident with an active edge. Then, for every active vertex $v$, one of the following holds with respect to the connected component $S_v$ of the graph $(V,E_{F})$ that contains $v$:
	\begin{enumerate}
		\item $v$ is not incident with an active edge and hence $S_v$ is a singleton, or
		\item $v$ is incident with an active edge, which implies that $\deg{v} \geq 3$ and, as there can be only one such vertex, $v$ is the only active vertex in $S_v$.
	\end{enumerate}
	In both cases, $v$ is the only active vertex in $S_v$ and hence every connected component of $(V,E_F)$ contains at most one active vertex. Thus, $F$ satisfies the conditions of \cref{thm:suff_cond} and $f$ is extremal.

	Now, assume that $V_F$ does contain a vertex $v$ of degree at most $2$ that is incident with an active edge $e^* \in E_F$ and let $\alpha_F: V_F \rightarrow E \setminus E_F$ be a vertex-orientation map for $F$.

	Then $e^* \in E_F$ and $\alpha_F(v) \in E \setminus E_F$ are the only edges incident with $v$.
	Assume \Wlog~that both are oriented away from $v$ and let $s,t \in V$ be such that $\alpha_F(v)=(v,t)$ and $e^* = (v,s)$.
	Let $p^*_v := \sum_{e \in \delta^\outn(v)} f_e - \sum_{e \in \delta^\inn(v)} f_e = f_{vt}+f_{vs}$.

	Since $(v,s)$ and  $v$ are active, it follows $f_{vs} \in \Set{\fu_{vs},\fl_{vs}}$ and $p^*_v \in \Set{\pu_v,\pl_v}$.
	 Thus, for every $f',f'' \in \QDCf(G)$ such that $f$ is a convex combination of $f'$ and $f''$, it must hold that $f'_{vs} = f''_{vs} =f_{vs}$ and $f'_{vs} + f'_{vt} = f''_{vs} + f''_{vt} =f_{vs} + f_{vt}$. Together, this also implies $f'_{vt} = f''_{vt} = p^*_v-f_{vs}$.

	 Let $\fmax'$ be given by
	\begin{equation*}
		\fmax'_e = \begin{cases}
						(f_{vt},f_{vt}) & \text{if } e=(v,t)\\
						\fmax_e & \text{else}
					\end{cases}
	\end{equation*}
	and define $H:=(V,E,b,\pmax,\fmax')$. Then, we can conclude that $f$ is extremal in $\QDCf(G)$ if and only if $f$ is extremal in $\QDCf(H)$.

	But in $H$, $F' := (E_F \cup \Set{(v,t)}, V_F \setminus \Set{v})$ is an $\alpha$-tree that conforms to $f$. Since $|V_{F'}| < |V_F|$, we can conclude by induction that $f$ is extremal in $\QDCf(H)$ and therefore also in $\QDCf(G)$.
\end{proof}

\subsection{Non-degenerateness is \NP-hard} \label{sec:hardness}

In Section \ref{sec:classes_of_nondeg_graphs}, we have identified all graphs that are non-degenerate, regardless of the values $b$, $\pmax$, and $\fmax$ in a network derived from those graphs. Section \ref{sec:sufficient} collects some sufficent conditions for when a given $\alpha$-tree definitely identifies an extremal point.

We conclude the paper by noting that in general it is \NP-hard (see \cite{Garey:2002}) to determine whether in a given network every $\alpha$-tree identifies an extremal point.

	\begin{theorem}\label{thm:rep_prop_hard}
		Given a network $G=(V,E,b,\pmax,\fmax)$, it is \NP-complete to decide whether there exists and $\alpha$-tree $F$ and a non-extremal point $f \in \QDCf(G)$ that conforms with $F$.
	\end{theorem}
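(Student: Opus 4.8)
The plan is to establish NP-completeness by proving both membership in NP and NP-hardness. Membership is straightforward: given a candidate $\alpha$-tree $F$ together with a vertex-orientation map $\alpha_F$ and a point $f \in \QDCf(G)$, one can verify in polynomial time that $F$ conforms with $f$ (checking the edge and vertex constraints that define conformance), that $E_F \cup \alpha_F(V_F)$ is acyclic, and that $f$ is \emph{not} extremal. Non-extremality is certifiable in polynomial time by \cref{rem:f_extremal_g_zero}: it suffices to exhibit a nonzero differential flow $g$ with $f \pm g \in \QDCf(G)$, and both the differential-flow property (the existence of an inducing potential $\phi$) and the feasibility of $f \pm g$ can be checked by solving a linear system. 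Thus the certificate consists of the triple $(F, \alpha_F, g)$.

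For NP-hardness, the approach is to reduce from a known NP-hard problem by encoding its structure into a network whose degenerate $\alpha$-trees correspond exactly to solutions of the source problem. The natural starting point, given the characterization in \cref{thm:chord-free_nondeg}, is that degeneracy is forced by diamond (Wheatstone-bridge) minors whose branch weights satisfy a balance equation. I would first reduce to, or build directly on, the existence of balanced Wheatstone configurations. The key idea is that the balance condition $\frac{\sum_{e \in E_{wt}} b_e}{\sum_{e \in E_{ws}} b_e} = \frac{\sum_{e \in E_{vt}} b_e}{\sum_{e \in E_{vs}} b_e}$, which from the proof of \cref{thm:chord-free_nondeg} is what makes a diamond minor produce a non-extremal conforming $\alpha$-tree, can be made to encode an arithmetic or combinatorial constraint. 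The plan is therefore to construct, from an instance of a problem such as \textsc{Subset Sum} or a suitable satisfiability/partition-type problem, a network containing many diamond minors arranged so that a non-extremal conforming $\alpha$-tree exists \emph{if and only if} the source instance admits a solution.

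The central technical work lies in designing the gadgets so that the balance equation across the embedded bridges becomes tight precisely when the combinatorial constraint is satisfiable. Concretely, I would route the weights $b_e$ so that the sums $\sum_{e \in E_{vs}} b_e$ etc.\ aggregate the numerical data of the source instance (e.g.\ the target and the element values of a \textsc{Subset Sum} instance), while a choice of which edges lie in the $S$, $T$, and $W$ trees of the \cref{thm:chord-free_nondeg} construction encodes the selection of a subset. A non-extremal point conforming to the resulting $\alpha$-tree then exists exactly when some selection balances the bridge, matching the existence of a valid subset. One must verify that the forward direction (solution $\Rightarrow$ degenerate $\alpha$-tree) yields a genuine $\alpha$-tree with a nonzero $g$ as in the proof of \cref{thm:chord-free_nondeg}, and that the reverse direction (degenerate $\alpha$-tree $\Rightarrow$ solution) holds, i.e.\ that \emph{any} non-extremal conforming $\alpha$-tree must route through a balanced bridge and hence decode to a valid solution.

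The main obstacle I anticipate is the reverse direction of the reduction: controlling \emph{all} possible $\alpha$-trees and all possible inducing potentials $\phi$ so that degeneracy cannot arise through some unintended structure in the gadget, independent of the encoded instance. Because an $\alpha$-tree can mix edges and vertices freely and the vertex-orientation map is not unique, the network must be engineered (e.g.\ by setting most bounds to $\pm\infty$ and pinning only a few vertex/edge constraints, as in \cref{thm:chord-free_nondeg}) so that the only way to obtain a non-extremal conforming configuration is through the intended balanced diamond. Establishing this rigidity — ruling out spurious degenerate solutions — is where the bulk of the careful argument will be needed, and it is what distinguishes a correct hardness proof from a merely plausible gadget.
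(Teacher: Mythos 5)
Your membership argument is acceptable in spirit and close to the paper's: the paper also certifies non-extremality explicitly, though it packages the witness differently (it takes $f'$ to be the barycenter of the vertices of the minimal face of $\QDCf(G)$ containing $f$, so that $f'$ still conforms with $F$, and checks non-extremality via the rank of the matrix of constraints active at $f'$ rather than via an auxiliary flow $g$). If you use the certificate $(F,\alpha_F,f,g)$ you should still argue that a \emph{polynomial-size} rational pair $(f,g)$ exists whenever the answer is yes (e.g.\ take $f$ to be the midpoint of two distinct vertices of the minimal face and $g$ their half-difference); as stated, this standard but necessary point is skipped.

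The genuine gap is in the hardness part: you correctly identify the source problem (\textsc{SubsetSum}, exactly as in the paper) and the correct mechanism (degeneracy arises from balanced diamond/Wheatstone configurations), but you never construct the reduction, and you explicitly defer the reverse direction (``ruling out spurious degenerate solutions''), which is precisely where all the content of the theorem lies. The paper's proof consists of a concrete gadget and a rigidity case analysis: the network has vertices $s,t,v,w,v_1,\dotsc,v_n$, edges $(v,v_i)$ and $(v_i,t)$ with \tension $2\alpha_i$, edge $(s,v)$ with \tension $\beta$, and edges $(v,w)$, $(w,t)$, $(s,w)$ with \tension $1$; finite capacity bounds are placed only on $(v,w)$ and the $(v,v_i)$, and finite vertex bounds only at $v,w,v_1,\dotsc,v_n$ (with $\pmax_v=(-\beta,\beta)$). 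Crucially, a subset $I\subseteq[n]$ is \emph{not} encoded by a choice of trees $S,T,W$ as in \cref{thm:chord-free_nondeg} (your sketch points in that direction), but by the combinatorial choice inside the $\alpha$-tree itself: for each $i$, whether the vertex $v_i$ or the edge $(v,v_i)$ is active. The rigidity argument then shows, using \cref{thm:suff_cond_small_deg} and a case analysis on whether $(v,w)$ is active and whether some $v_i$ and $(v,v_i)$ are simultaneously active, that any potential $\phi$ with $f\pm B^\top\phi\in\QDCf(G)$ must vanish unless exactly one element of each pair $\{v_i,(v,v_i)\}$ is active, in which case a nonzero $\phi$ exists if and only if flow balance at $v$ holds, i.e.\ $\beta=\sum_{v_a\text{ active}}\alpha_a$. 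Without this construction and the accompanying case analysis, your proposal establishes neither direction of the equivalence between yes-instances and degenerate conforming pairs, so it does not yet prove \NP-hardness.
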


	\begin{proof}
		We first settle membership in \NP: Suppose that there exists a non-extremal solution $f \in \QDCf(G)$ that conforms to the $\alpha$-tree $F=(E_F,V_F)$. Let $P$ denote the set of vertices of $\QDCf(G)$ that are contained in the minimal face of $\QDCf(G)$ which contains $f$ and let $f' := \nicefrac{1}{|P|} \sum_{p \in P} p$. Then, $f'$ conforms to $F$ as well and is not extremal (since $|P| \geq 2$). Given a certificate in the form of $f'$, $F$ and a corresponding vertex-orientation map $\alpha_F: V_F \rightarrow E \setminus E_F$ (which are all polynomial in size), we can easily check that $F$ is indeed an $\alpha$-tree and that $f'$ conforms with $F$. Finally, non-extremality of $f'$ can be verified in polynomial time via the rank of the matrix of active constraints.

		To prove the hardness, we provide a reduction from the \NP-complete problem \textsc{SubsetSum} \cite{Garey:2002}. Given a number $n \in \naturals$ and sizes $\alpha_i \in \naturals$ for each $i \in [n]$, as well as $\beta \in \naturals$, decide whether there is a subset $I \subseteq [n]$ with $\sum_{i \in I} \alpha_i = \beta$.

		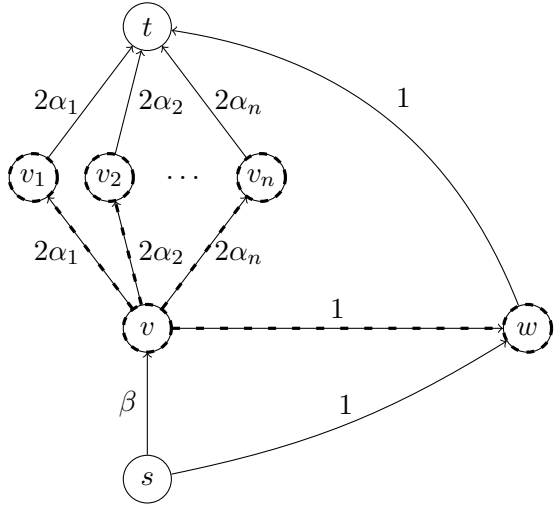
\begin{figure}
			\centering
		\begin{tikzpicture}
			\node[gridnode] (v1) at (0,0) {$v$};
			\node[gridnode] (v2) at (5,0) {$w$};
			\node[gridnode] (s) at (0,-2) {$s$};
			\node[gridnode] (t) at (0,4) {$t$};
			\node[gridnode] (a1) at (-1.5,2) {$v_1$};
			\node[gridnode] (a2) at (-0.5,2) {$v_2$};
			\node (a3) at (0.5,2) {\dots};
			\node[gridnode] (a4) at (1.5,2) {$v_n$};

			\node[gridnode, very thick, dashed] at (v1) {};
			\node[gridnode, very thick, dashed] at (v2) {};
			\node[gridnode, very thick, dashed] at (a1) {};
			\node[gridnode, very thick, dashed] at (a2) {};
			\node[gridnode, very thick, dashed] at (a4) {};

			\draw
				(v1)	edge[->] node[left] {$2\alpha_1$} (a1)
						edge[->] node[right] {$2\alpha_2$} (a2)
						edge[->] node[right] {$2\alpha_n$} (a4)
						edge[->] node[above] {$1$} (v2);

			\draw[loosely dashed, very thick]
				(v1)	edge (a1)
						edge (a2)
						edge (a4)
						edge (v2);

			\draw
				(t)	edge[<-] node[left] {$2\alpha_1$} (a1)
					edge[<-] node[right] {$2\alpha_2$} (a2)
					edge[<-] node[right] {$2\alpha_n$} (a4)
					edge[<-, bend left] node[above right] {$1$} (v2);
			\draw (s) edge[->] node[left] {$\beta$} (v1)
					edge[->, bend right=10] node[above] {$1$} (v2);
		\end{tikzpicture}
		\caption{Solving the well-known \NP-complete problem \textsc{SubsetSum} is equivalent to determining whether the shown network (the edge weights represent the edge \tension $b$) is non-degenerate.}
		\label{fig:rep_prop_hard}
		\end{figure}

		Consider the network depicted in \cref{fig:rep_prop_hard} where the shown edge weights denote the edge \tension $b$, thickly dashed edges have an upper capacity bound of $1$, the thickly dashed vertices have the following upper and lower bounds: $\pmax_v = (-\beta,\beta)$ and $\pmax_w = \pmax_{v_1} = \dotsb = \pmax_{v_n} = (0,\infty)$. All other edges and vertices have infinite upper and lower bounds. We prove that the \textsc{SubsetSum} instance is a \emph{yes}-instance if and only if the network is degenerate.

		Let $f$ be a flow and $F=(E_F,V_F)$ an $\alpha$-tree which conforms to $f$. Let $\alpha_F: V_F \rightarrow E \setminus E_F$ be a vertex-orientation map for $F$. We start by observing that by \cref{thm:suff_cond_small_deg}, $f$ is extremal unless both vertices $v$, $w$ are active in $F$.
		Now, let $\phi$ be a potential such that $f \pm B^\top \phi$ is feasible. This means in particular that $B^\top \phi$ disappears on all inequalities active in $F$. W.l.o.g., let $\phi_{v} := 0$. Since $f = \nicefrac{1}{2} (f + B^\top \phi + f - B^\top \phi)$, it holds that $f$ is extremal if and only if $\phi \equiv 0$ is the only possible solution.

		We now show that if for one of the vertices $v_i$, both the vertex and the edge connecting it to $v$ are active, then $f$ is extremal. As the edge $(v,v_i)$ is active, we have that $\phi_{v_i}=0$. Similarly, as $v_i$ is active, $\phi_t = 0$. We distinguish two cases:
		\begin{enumerate}
			\item The edge $(v,w)$ is active. In this case, it follows that $\phi_{w}=\phi_{v}=0$ and hence $\phi_{s}=0$ by activity of $w$. As $F$ is an $\alpha$-tree, for every vertex $v_j$ at least one of the two edges incident with $v_j$ must be contained in $E_F \cup \alpha_F(V_F)$. Since $F$ conforms with $f$, we have that for \emph{all but one} of these, either $v_j$ is active or the edge $(v,v_j)$ (the remaining vertex can be connected by $v$). In both cases, $\phi_{v}=\phi_t=0$ implies that $\phi_{v_j}=0$. For the final vertex $v_{j^*}$, the same now follows by activity of $v$.
			\item The edge $(v,w)$ is not active. Then either $\alpha_F(v) = (s,v)$ or $\alpha_F(w) = (s,w)$, since otherwise $s$ would be disconnected. Furthermore, if $\alpha_F(w) = (s,w)$ then $\alpha_F(v) = (v,w)$, since otherwise the pair $\Set{s,w}$ would be disconnected (as neither $t$ nor $(w,t)$ can be active). Together, we have $\alpha_F(v) \in \Set{(s,v),(v,w)}$. As above, for every vertex $v_j$ at least one of the two edges incident with $v_j$ must be contained in $E_F \cup \alpha_F(V_F)$. Since $F$ conforms with $f$ and $\alpha_F(v) \in \Set{(s,v),(v,w)}$, we have that for \emph{all} of these, either $v_j$ is active or the edge $(v,v_j)$. In both cases, $\phi_{v}=\phi_t=0$ implies that $\phi_{v_j}=0$. Now, suppose that $\phi_{w}>\phi_{v}$. Then, activity of $v$ implies that $\phi_{s} < \phi_{v}$. At the same time, activity of $w$ implies that $\phi_s > \phi_{w}$, a contradiction. Therefore, $\phi \equiv 0$.
		\end{enumerate}
		We have concluded that $\phi \equiv 0$ if for one of the vertices $v_i$, both the vertex and the edge connecting it to $v$ are active.

		If $f$ is \emph{not} extremal, it must therefore hold that for all vertices $v_i$, only the vertex itself or the edge connecting it to $v$ can be active. At the same time, as $F$ is an $\alpha$-tree it has to hold that $|F|=n+3$. This implies that the edge $(v,w)$ is active and for all vertices $v_i$, \emph{exactly} one of the two (the vertex itself or the edge connecting it to $v$) needs to be active.
		In this case, $\phi_{v}= 0$ implies $\phi_{w} = 0$ (by activity of $(v,w)$) and furthermore $\phi_{v_j}=0$ for all $v_j$ for which the edge $(v,v_j)$ is active, while for all $v_j$ that are active, it follows that $\phi_{v_j}=\nicefrac{1}{2} \cdot \phi_t$.

	 By activity of $w$, it follows that $\phi_s = -\phi_t$. 	Thus, $\phi_t =0$ would imply $\phi \equiv 0$, in contradiction to $f$ not extremal. Hence, $\phi_t \neq 0$ and \Wlog~we may assume $\phi_t > 0$.  The vector $\phi$ now satisfies all constraints that are active in $F$ if and only if the flow balance at $v$ is 0, \ie
		\begin{equation*}
			\phi_s \cdot \beta = \sum_{\text{$v_a$ active in $F$}} \frac{\phi_t}{2} \cdot 2\alpha_a= \phi_s \cdot \sum_{\text{$v_a$ active in $F$}} \alpha_a.
		\end{equation*}
		Such a selection of active vertices $v_j$ hence exists if and only if $(n,\alpha_1,\alpha_2,\dotsc,\alpha_n,\beta)$ is a \emph{yes}-instance of \textsc{SubsetSum}.
	\end{proof}

\section{Conclusion and Outlook}

We have investigated the polytope of \emph{differential flows} in a network, which captures a common way of representing electrical power flows in \emph{Energy System Optimization}. A characterization of extremal points in terms of so-called $\alpha$-trees has been derived, similar to the characterization of extremal points in classical network flow problems by spanning trees of edges with non-zero flow. It is proven that every extremal solution can be represented by a corresponding $\alpha$-tree (\cref{thm:alpha-tree-ex}) and that the reverse is also true in almost all cases. In particular, it is true regardless of the choice of network parameters for networks on cactus graphs (\cref{thm:chord-free_nondeg}). Finally, for a specific given network, we showed that it is \NP-hard to decide whether every feasible $\alpha$-tree indeed corresponds to an extremal solution (\cref{thm:rep_prop_hard}).

Our work may raise new theoretical questions, most notably in the area between networks on cactus graphs, which are always non-degenerate, and our hardness result: Can other classes of networks be identified (e.g., based on the elasticity vector $b$) for which non-degenerateness can always be guaranteed?

From a practical perspective in the context of power networks, our work might open two interesting directions of future work:
\begin{enumerate}
\item Algorithmically, one famous application of the characterization of extremal points in network flow problems is the \emph{Network Simplex Algorithm}, which allows for an extremely efficient solution of associated optimization problems. Since \cref{thm:alpha-tree-ex} guarantees that every extremal differential flow can be represented by an $\alpha$-tree, a similar approach might be possible for optimization problems over differential flows, as they widely appear in Energy System Optimization.
\item Structurally, differential flows which might not be feasible (see \cref{def:feas_diff_flow}) can be interpreted as power flows which are electrically feasible, but violate edge and/or vertex capacity constraints. This might be a useful concept in the study of \emph{Capacity Expansion} problems, which investigate the optimal increase in generation and network capacities to make given energy systems feasible.
\end{enumerate}


\printbibliography[heading=bibintoc]

\end{document}